\renewcommand{\newblock}{} 
\DeclareSymbolFont{AMSb}{U}{msb}{m}{n}
\DeclareMathSymbol{\lr}{\mathbin}{AMSb}{'036}
\DeclareMathSymbol{\rr}{\mathbin}{AMSb}{'037}
\newcommand{\card}[1]{\vert #1 \vert}
\newcommand{\klam}[1]{\ensuremath{\left\langle #1 \right\rangle}}
\newcommand{\set}[1]{\ensuremath{\{#1\}}}
\newcommand{\tiff}{if and only if \ }
\newcommand{\Implies}{\Rightarrow}
\newcommand{\Iff}{\Longleftrightarrow}
\newcommand{\tand}{\text{ and }}
\newcommand{\tor}{\text{ or }}
\newcommand\qtand{\quad\text{and}\quad}
\newcommand{\z}{\emptyset}
\newcommand{\poss}[1]{\ensuremath{\klam{#1}}}
\newcommand{\nec}[1]{\ensuremath{[ #1 ]}}
\newcommand{\suff}[1]{\ensuremath{[[ #1 ]]}}
\newcommand{\dusuff}[1]{\ensuremath{\klam{\klam{ #1 }}}}
\newcommand{\necu}[1]{\ensuremath{[ #1 ]]}}
\newcommand{\possu}[1]{\ensuremath{\klam{ #1 }\rangle}}
\newcommand{\nece}[1]{\ensuremath{[ #1 ]_{E}}}
\newcommand{\suffe}[1]{\ensuremath{[[ #1 ]]_{E}}}
\newcommand{\QED}{\hfill$\dashv$}
\renewcommand{\phi}{\varphi}
\renewcommand{\S}{\ensuremath{\mathcal{S}}\xspace}
\newcommand{\I}{\ensuremath{\mathcal{I}}\xspace}
\newcommand{\Act}{\mathtt{{Act}}}
\newcommand{\oX}{\overline{X}}
\newcommand{\oY}{\overline{Y}}
\newcommand{\oZ}{\overline{Z}}
\newcommand{\uX}{\underline{X}}
\newcommand{\uY}{\underline{Y}}
\newcommand{\uZ}{\underline{Z}}
\newcommand{\ovarphi}{\overline{\varphi}}
\newcommand{\oalpha}{\overline{\alpha}}
\newcommand{\uvarphi}{\underline{\varphi}}
\newcommand{\ualpha}{\underline{\alpha}}
\newcommand{\on}{\mathtt{ON}}
\newcommand{\off}{\mathtt{OFF}}
\newcommand{\df}{\coloneqq}
\let\defeq=\df
\newcommand\Iffdef{\;\mathrel{\mathord{:}\mathord{\Longleftrightarrow}}\;}
\numberwithin{equation}{section}
\theoremstyle{plain}
 \newtheorem{theorem}{Theorem}[section]
 \newtheorem{lemma}[theorem]{Lemma}
\theoremstyle{definition}
\newtheorem{definition}[theorem]{Definition}
\newtheorem{example}[theorem]{Example}
\definecolor{firstcolumn}{HTML}{2A629A}
\definecolor{labelsraw}{HTML}{2A629A}
\definecolor{header}{HTML}{FEAE6F}
\definecolor{lines}{HTML}{01204E}
\definecolor{mildorange}{HTML}{FF7F3E}
\definecolor{mildgold}{HTML}{FFDA78}
\newcolumntype{s}{>{\columncolor{firstcolumn!25}} c}
\title{Towards a logic of affordances}
\author{Rafa\l\ Gruszczy\'nski, Paula Mench\'on, Ivo D\"untsch, and G\"unther Gediga}
\date{}
\address{Rafa\l\ Gruszczy\'nski, \textsc{Orcid:} 0000-0002-3379-0577\\
Department of Logic\\
Nicolaus Copernicus University in Toru\'n\\
Poland}
\email{gruszka@umk.pl}
\urladdr{https://www.umk.pl/~gruszka/}
\address{Paula Mench\'on, \textsc{Orcid:} 0000-0002-9395-107X\\
Department of Logic\\
Nicolaus Copernicus University in Toru\'n\\
Poland\\
and
Universidad Nacional del Centro de la Provincia de Buenos Aires\\
Tandil, Buenos Aires\\
Argentina
}
\email{mpmenchon@nucompa.exa.unicen.edu.ar}
\address{Ivo D\"untsch, \textsc{Orcid:} 0000-0001-8907-2382\\
Department of Computer Science\\
Brock University\\	
St Ca\-tha\-ri\-nes, Ontario\\
Canada}
\email{duentsch@brocku.ca}
\urladdr{https://www.cosc.brocku.ca/~duentsch/}
\address{G\"unther Gediga\\
Institut f\"ur Evaluation und Marktanalysen\\
Bissendorf\\
Germany
}
\email{gediga@eval-institut.de}
\begin{document}

\begin{abstract}
    We aim to construct a formal theory of affordances seen as ternary relations. Beginning with a characterization of affordances proposed by James J. Gibson, and utilizing the tools provided by Zdzisław Pawlak's information systems and rough sets, we construct a mathematically precise definition of both crisp and rough affordances. Then, we analyze modal and approximation operators that enable reasoning about affordances in both scenarios.

    \medskip

    \noindent\textsc{Keywords:} affordance relations, ternary relations, rough sets, rough relations, approximation operators, modal logic

    \medskip

    \noindent\textsc{Msc:} 03B52, 03B45
\end{abstract}

\maketitle

\thispagestyle{empty}

\section{Introduction}

The concept of \emph{affordance} was introduced by the American psychologist James J. Gibson in \cite{gibson1979}. In his own words

\begin{quote}
\emph{The affordances of the environment are what it offers the animal, what it provides or furnishes, either for good or ill \ldots I mean by it something that refers to both the environment and the animal in a way that no existing term does. It implies the complementarity of the animal and the environment.}  \cite[p. 119]{gibson1979}
\end{quote}

In a way, affordances are action possibilities that are ``offered'' to an actor by an environment. Some examples of affordances are \cite{stoffregen2003}:

\begin{itemize}
\item If a gap in a wall has a certain size relative to the size of a person, the gap affords passage.
\item If a surface is rigid relative to the weight of an animal, it affords stance and perhaps traversal.
\item If a ball falls with a certain velocity relative to a person’s running speed, then it affords catching.
\item If the time gap between the passage of successive cars on a road is greater than the time needed for a pedestrian to cross, then safe crossing is afforded.
\item If a stair is a certain proportion of a person’s leg length, it affords climbing.
\end{itemize}

It is important to note that in Gibson's world the objects in the environment are not conceptual in the first place (``This is a bucket'') but obtained from the concrete physical features of the visual field (after recognizing invariants etc.). The name ``bucket'' is just a label arising from an affordance.

Affordances as binary relations have been analyzed and studied from various perspectives and within various branches of science in recent years:
\begin{itemize}
\item Human factors and design,
\item Robotics and autonomous systems,
\item Virtual reality and augmented reality,
\item Sports and rehabilitation,
\item Cognitive science, neuroscience,
\item And more \ldots
\end{itemize}

An overview of history and recent developments of affordances is \cite{mhk24}.

In the sequel, we would like to change the perspective on affordances and treat them not as binary but as ternary phenomena. It is a common feature of the surrounding world that we observe interactions between actors and objects in environments:
\begin{itemize}
    \item A dog fetches a stick on a meadow,
    \item A basketball player dunks a~ball on a playground,
    \item A robot serves a meal in a restaurant,
    \item A student solves a test in a classroom,
    \item A mathematician writes a proof on a blackboard in their office,
    \item An autonomous car is delivering a group of people to a hotel in a city.
\end{itemize}
In each of the scenarios, we can distinguish four components: an actor, an object, an environment, and an action being performed. However, not every action can be performed by any actor, or with any object, or in any environment. To dunk a ball, an actor has to be sufficiently tall and agile, have a~proper ball that has suitable size and weight, and be on a playground with a basketball hoop. Thus, properties of the three components of an action must be taken into account.

In this work, we intend to conceptualize affordances as ternary relations between actors, objects, and environments, taking into account their properties.\footnote{Some work toward affordances as ternary phenomena (though not necessarily relations in the standard logical sense) was carried out in~\cite{sahin2007aon}.} The main goal is to construct a framework for building logic (or logics) for reasoning with affordances. Our pre-theoretical definition of affordance is as follows:
\begin{definition}\label{def:afford}
    An \emph{affordance} is an action possibility formed by the relationship among an actor, an object, and their environment.\QED
\end{definition}

The relevant components of our approach to this work are as follows:
\begin{itemize}
    \item Actors and objects have various properties, such as height, weight, color, agility, abilities, disabilities, skills, shape, and so on.
    \item Environments also have properties that may be elementary (in some sense), like flat, bumpy, stifling, hot, wet, rainy, or, more complex, adapted for playing basketball, multi-level, equipped with an elevator, dangerous for health, to mention a~few.
\end{itemize}

\section{Information systems}\label{sec:InfSys}

It is not our goal to find out what affordances <<really>> are, but rather to look at them (whatever these are) from a ternary perspective, which we find natural. For us, affordances are ternary relations that hold among actors with properties, objects with properties, and environments with properties; and the aim is to build an operational concept of an affordance that will allow for applications of algebraical and logical tools. We are not going, and don't want, to judge whether this reveals the ``essence'' of affordances. But we want to put forward the tools that will allow us to speak and reason with affordances in an efficient way.

A simple way of organizing knowledge are property systems introduced, among others, by \citet{vak98}. A \emph{property system} (\emph{P-system}) is a structure $\klam{U, V, f}$, where $U$ is a non-empty set whose elements are called \emph{objects}, $V$ is a set whose elements are called \emph{properties}, and $f\colon U \to 2^V$ is a mapping called an \emph{information function}; we do not require that $f(x) \neq \z$. A statement $a \in f(u)$ can be interpreted as ``Object $u$ possesses property $a$''. If $U$ is finite, then a property system is definitionally equivalent to a formal dyadic context of \citet{wille82}, by observing that the function $f\colon U \to 2^V$ can be replaced by a relation $R_f \subseteq U \times V$, where $u \mathrel{R_f} a$ \tiff $a \in f(u)$ which has the same informational content.\footnote{At this stage of our investigation we suppose that we have a correct description of the world, i.e. what we observe is true.}

If we think of a property system as describing possible states of an attribute---such as ``color'' or ``language spoken''---we extend it by the definition of an aggregate structure: An \emph{attribute system} (\emph{A-system}) \cite{vak98} is a structure $\S \df \klam{U, \Omega, \set{V_p: p \in \Omega}, f}$ where
\begin{enumerate}
\item $U$ is a non-empty set of objects,
\item $\Omega$ is a set of property labels or attributes, and $V_p$ is a set of possible values of $p \in \Omega$,
\item $f\colon U \times \Omega \to \bigcup_{p \in \Omega} 2^{V_p}$ is a choice function, where $f(x,p) \subseteq V_p$. Equivalently, we may define $f\colon U \to \prod_{p \in \Omega} 2^{V_a}$.
\end{enumerate}
So, if $p\in\Omega$ is a property label \emph{weight}, then $V_{\mathit{weight}}$ may be a set of rational numbers in some interval that can serve as numerical expressions of the weight of an object (e.g., in kilograms or pounds), or any value that makes sense. It could also be some aggregated value such as ``low'', ``medium'', ``high'' etc.

We call $\klam{U, \Omega, \set{V_p: p \in \Omega}}$ the \emph{skeleton} of $\S$. The product $U \times \prod_{p \in \Omega} 2^{V_p}$ collects all possible vectors of value sets that can be associated with some element of $U$. An information function now picks one element from $\prod_{p \in \Omega} 2^{V_p}$ for each $x \in U$.

Clearly, an attribute system can be regarded as a collection of the property systems. \citet{dgo_ras} introduce relational attribute system as a relational form of attribute systems, and present formalizations of various interpretations of a statement such as $x \in f(x,p)$. These are closely related to databases with incomplete information investigated by \citet{Lipski1977}. A deduction system for reasoning in relation algebras derived from relational attribute systems was presented in \cite{dgo_ras2}.

An element $x \in U$ is called \emph{deterministic}, if $\card{f(x,p)} \leq 1$ for all $p \in \Omega$ or every projection of the vector attribute to $x$ is either an empty set or a singleton subset of $V_p$. The set of all deterministic elements of \S is denoted by $D_\S$. The characterization stems from the fact that the role of the choice function is to narrow down the possibilities for the values of $p$ with respect to the object $x$. If $|f(x,p)|=1$, then we know the exact value of the property $p$ for $x$, or if $|f(x,p)|=\emptyset$, then we know that $x$ does not have this property at all. This is why we call a system with $|f(x,p)|\leqslant 1$ deterministic. If $\card{f(x,p)} \geq 2$ the set has different possibilities of interpretation, see \cite{dgo_ras}.

If $U$ is finite and $U = D_\S$, then \S is called an \emph{information system} (in the sense of \citet{paw82}). In this case we may simplify the definition of the information function to have codomain $\bigcup_{p \in \Omega} V_p$. \citet{vak98} calls these systems ``ontological information systems'', and provides logical counterparts of P-systems and A-systems based on Scott's deduction systems. He continues to develop modal logics for similarity relations on P-systems and A-systems and shows completeness with respect to finite models, continued in \cite{Vakarelov2005}.

\pagebreak

\begin{example}\label{ex:actors}
    Below
\begin{itemize}
    \item $A$ is the domain, a finite set of people.
    \item $\Omega\defeq\{\mathrm{height},\mathrm{agility}\}$ is the set of property labels.
    \item $V_{\mathrm{height}}\defeq\{150,151,\ldots,210\}$ and $V_{\mathrm{agility}}\defeq\{\mathsf{L,S,H,P}\}$ where the four letters describe the level of agility: low, standard, high, professional.
    \item $f\colon A\to V_{\mathrm{height}}\times V_{\mathrm{agility}}$.
\end{itemize}

\medskip

\begin{center}
\begin{tikzpicture}
    \node [shape=rectangle,inner sep=0pt] (act)  {
\begin{tabular}{ |s|cc| }
\hline
\rowcolor{header!25} \multicolumn{3}{|c|}{\textsc{\large Information system for actors}} \\
\hline
\rowcolor{labelsraw!25}\diagbox{$A$}{$\Omega_A$} & height (cm) & agility \\
\hline
$a_1$ & 201 & L  \\
$a_2$ & 155 & P\\
$a_3$ & 189 & H\\
$a_4$  & 190 & L\\
$a_5$ &  178 & S\\
$a_6$ & 181 & P\\
$a_7$ & 190 & L \\
$a_8$ & 201 & L  \\
$a_9$ &  178 & S\\
\hline
\end{tabular}
};
\end{tikzpicture}
\end{center}

\smallskip

\noindent All in all, we have nine actors that are described to us by two properties only.\QED
\end{example}

\begin{example}\label{ex:playgrounds}
The properties we consider do not have to be in any way simple or elementary. We can consider complex features, such as \emph{being adapted to playing basketball}. Below, we have an information system for playgrounds, each of which is characterized in terms of three complex attributes: being adapted or not (in symbols 1 or 0) to playing basketball, soccer, and volleyball.

\medskip

    \begin{center}
\begin{tabular}{ |s|ccc| }
\hline
\rowcolor{header!25} \multicolumn{4}{|c|}{\textsc{\large Information system for playgrounds}} \\
\hline
\rowcolor{labelsraw!25}\diagbox{$E$}{$\Omega_{E}$} & basketball & soccer & volleyball \\
\hline
$e_1$ & 1 & 1 & 1\\
$e_2$  & 1 & 1 & 0\\
$e_3$ & 1 & 0 & 1\\
$e_4$ & 1 & 0 & 0\\
$e_5$ & 0 & 1 & 1\\
$e_6$ & 0 & 1 & 0\\
$e_7$ & 0 & 0 & 1\\
$e_8$ & 0 & 0 & 0\\
$e_9$ & 1 & 1 & 1\\
$e_{10}$ & 0 & 1 & 1\\
\hline
\end{tabular}
\end{center}\QED
\end{example}

\begin{example}
    In this example, let $e_i$ be a plate with $2^i$ diodes, for $i\in\{1,2,3,4\}$. We take $E\df\{e_1,e_2,e_3,e_4\}$, and treat each $e_i$ as an independent environment. This time we put
    \[
    \Omega^E\df\{\mathrm{state}_1,\mathrm{state}_2,\mathrm{state}_3,\mathrm{state}_4\},
    \]
    where each label is interpreted as the state of one of the four environments. For each $i$, $V^{E}_{\mathrm{state}_{i}}$ is the power set of the set diodes of $e_i$. E.g., Since $e_4$ is equipped with sixteen diodes, $V^{E}_{\mathrm{state}_{4}}$ has $2^{16}=65536$ elements, each one indicating which of the diodes are on, and which are off. Thus, $\emptyset$ is the value indicating that all the diodes are off, while the value $\{d^{e_4}_1,d^{e_4}_2\}$ says that $d^{e_4}_1,d^{e_4}_2$ are both on, and the remaining two are off. Each information function $f_E\colon E\to\prod_{i\leqslant 4}V^{E}_{\mathrm{state}_{i}}$ describes conjunctively the states in which all four environments are simultaneously. For example, the function $f_E$ with the following values
    \[
        f_E(e_1)=\{d^{e_1}_1\}\quad f_E(e_2)=\emptyset\quad f_E(e_3)=\{d^{e_3}_3,d^{e_3}_7\}\quad f_E(e_4)=\{d^{e_4}_2,d^{e_4}_4,\ldots,d^{e_4}_{16}\},
    \]
    corresponds to the configuration of four environments where in $e_1$ one diode is active, in $e_2$ no diode is active, in $e_3$ two diodes are active, and in $e_4$ eight. Even in such a relatively simple environment we have
    \[
        (4\cdot 16\cdot 256\cdot 65536)^4 = 2^{30}
    \]
    functions $f_E$ and as many information systems with the skeleton $\klam{E,\Omega^E,\{V^E_{\mathrm{state}_{i}}\mid i\leqslant 4\}}$. Observe that the skeleton from this example can be obtained from four simpler skeletons of the kind described in Example~\ref{ex:IS-four-diodes}.  The difference lies in the fact that in this example we can look into the structure of the elements of each $e\in E$.\QED
\end{example}

\subsection{Information systems for environments}

There are two stances we can take with respect to understanding of the information system $\I_E$ related to environments and their features:
\begin{enumerate}[label=(E\arabic*),ref=E\arabic*]\label{infdiv}
    \item\label{it:E1} We assume that there is one underlying environment (or context) in some pre-theoretical sense, the elements of $E$ are <<parts>> of the environment, and labels in $\Omega_E$ are interpreted as features of these parts. In this case, $V^E_p$ for $p\in\Omega_E$ is the set of values for parts of the environment.

\item\label{it:E2} Or we treat the elements of $E$ as different environments that can have different properties (features) labeled by the elements of $\Omega_E$, in which case each $V^E_p$ represents values of the features of the whole environment $e\in E$.
\end{enumerate}

In \eqref{it:E1}, the environment is regarded as a snapshot (or picture) of the environment which is fixed, and actors and entities are ``variable'' in the sense that each row of the information system corresponds to one actor and one entity. In \eqref{it:E2}, each row corresponds to one environment. These can be integrated by supposing that each row consists of a description of the environment as in \eqref{it:E1}. In this sense, \eqref{it:E1} is a special case of \eqref{it:E2}. We could have similar snapshots for actors and entities. Observe that the main difference is in the independence of rows of tables in information systems.

\begin{example}\label{ex:IS-four-diodes}
    Suppose our environment is a small plate with four diodes $d_1,d_2,d_3,d_4$. Each diode can be in two states, $\off$ or $\on$ (emitting light). In this case the information system $\I_E$ has the following components:
    \begin{enumerate}[label=(\roman*),itemsep=3pt]
        \item $E\df\{d_1,d_2,d_3,d_4\}$.
        \item $\Omega_E\df\{\mathrm{state}\}$, one label interpreted as the state of a diode.
        \item $V^E_{\mathrm{state}}\df\{\on,\off\}$, i.e., the property \emph{state} has two possible values.
        \item $f^A\colon E\to V^E_{\mathrm{state}}$, i.e., one of the sixteen functions that ``say'' which diodes are on and which are off.
    \end{enumerate}
    Consequently, we have sixteen different information systems based on the skeleton \[\klam{E,\Omega_E,\{V^E_{\mathrm{state}}\}}.\] Here, there is one environment, and the elements of $E$ are parts of this environment. So this example is a representative of the perspective \eqref{it:E1} above.\QED
\end{example}

\begin{example}
    In this example, which is related to \eqref{it:E2}, let $e_i$ be a plate with $2^i$ diodes, for $i\in\{1,2,3,4\}$. We take $E\df\{e_1,e_2,e_3,e_4\}$, and treat each $e_i$ as an independent environment. This time we put
    \[
    \Omega_E\df\{\mathrm{state}_1,\mathrm{state}_2,\mathrm{state}_3,\mathrm{state}_4\},
    \]
    where each label is interpreted as the state of one of the four environments. For each $i$, $V^{E}_{\mathrm{state}_{i}}$ is the power set of the set diodes of $e_i$. E.g., Since $e_4$ is equipped with sixteen diodes, $V^{E}_{\mathrm{state}_{4}}$ has $2^{16}=65536$ elements, each one indicating which of the diodes are on, and which are off. Thus, $\emptyset$ is the value indicating that all the diodes are off, while the value $\{d^{e_4}_1,d^{e_4}_2\}$ says that $d^{e_4}_1,d^{e_4}_2$ are both on, and the remaining two are off. Each information function $f_E\colon E\to\prod_{i\leqslant 4}V^{E}_{\mathrm{state}_{i}}$ describes conjunctively the states in which all four environments are simultaneously. For example, the function $f_E$ with the following values
    \[
        f_E(e_1)=\{d^{e_1}_1\}\quad f_E(e_2)=\emptyset\quad f_E(e_3)=\{d^{e_3}_3,d^{e_3}_7\}\quad f_E(e_4)=\{d^{e_4}_2,d^{e_4}_4,\ldots,d^{e_4}_{16}\},
    \]
    corresponds to the configuration of four environments where in $e_1$ one diode is active, in $e_2$ no diode is active, in $e_3$ two diodes are active, and in $e_4$ eight. Even in such a relatively simple environment we have
    \[
        (4\cdot 16\cdot 256\cdot 65536)^4 = 2^{30}
    \]
    functions $f_E$ and as many information systems with the skeleton $\klam{E,\Omega_E,\{V^E_{\mathrm{state}_{i}}\mid i\leqslant 4\}}$. Observe that the skeleton from this example can be obtained from four simpler skeletons of the kind described in Example~\ref{ex:IS-four-diodes}.  The difference lies in the fact that in this example we can look into the structure of the elements of each $e\in E$.\QED
\end{example}

Here, we are not going to adjudicate on which approach is ``better'' or more suitable for the formal theory of affordances. We only point to the fact that below constructing various examples we adhere to \eqref{it:E2} perspective on environments.

\section{Rough sets}

An information system differs from a relational database in such a way that duplicate rows of attribute values are allowed. The idea behind this assumption is the observation that we can observe the world only up to a certain granularity which depends on the available (or interesting) information. With such limited knowledge it is reasonable to assume that more than one object has the same associated row of attribute values. Furthermore, with only the knowledge given by the information system we cannot distinguish objects with the same value vector. This leads to the following definition of an \emph{indiscernibility relation} $\theta$:
\begin{definition}
Given an information system $\I\defeq\klam{U,\Omega,\{V_{p}\mid p\in\Omega\},f}$, for all $a,b \in U$
\[
a \mathrel{\theta} b \Iffdef f(a)=f(b),
\]
that is, $a$ and $b$ have the same value vector. In this case, we say that $a$ is similar to $b$ (or that $a$ and $b$ are similar). Obviously, in the case of the deterministic systems, $a$ and $b$ are similar if they have precisely the same values for all the property labels in $\Omega$. \QED
\end{definition}
Clearly, $\theta$ is an equivalence relation on $U$ whose classes we denote by $\nec{a}_\theta$ or simply by $\nec{a}$ if no confusion can arise. By means of $\theta$, we can make use of a concept of a \emph{rough set} to incorporate the granularity of the world into our formalism. Given a subset $X$ of $U$ and an element $a \in U$, when can we say that certainly $a \in X$ or certainly $a \not\in X$? Since we know the world only up to the classes of $\theta$ we can say that
\begin{align*}
a \text{ is certainly in } X &{}\Iffdef [a] \subseteq X, \\
a \text{ is certainly not in } X &{}\Iffdef [a] \cap X = \z, \\
a \text{ is possibly in } X &{}\Iffdef [a] \cap X \neq \z \tand [a] \cap - X \neq \z.
\end{align*}

The regions of certainty with respect to $X$ are $\underline{X}$ and $U \setminus \overline{X}$, and the boundary of $X$ is $\overline{X} \setminus \underline{X}$, see Figure \ref{fig:rough}.

\begin{figure}[htb]
  \centering
\includegraphics[width=0.5\textwidth]{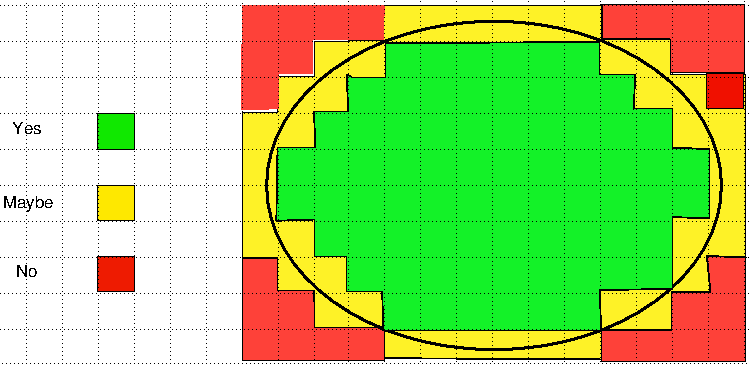}\caption{Rough areas of knowledge}\label{fig:rough}
\end{figure}

\begin{definition}
Formally, a \emph{rough set} is a pair $\klam{\underline{X}, \overline{X}}$\footnote{For a brief overview of the evolvement of rough sets see \cite{mod25}.} such that $X \subseteq U$ and
\begin{xalignat}{2}
\overline{X} &{}\defeq \set{z \in U: \nec{z} \cap X \neq \z}, &&\text{upper approximation,} \label{def:upp} \\
\underline{X} &{}\defeq \set{z \in U: [z] \subseteq X}, &&\text{lower approximation.} \label{def:low}
\end{xalignat}
The \emph{boundary} of $X$ is the set $\oX\setminus\uX$. Upper and lower approximations are closely related to the possibility, respectively, necessity operators of modal logic \cite{dg_approx}.\QED
\end{definition}

The basic assumption is that everything in this world can only be perceived up to a certain granularity, a finite set $\Omega$ of features (properties, attributes), each of which has a number of ``incarnations'', for example, the feature \emph{colour} may have a set of values $\set{\mathit{blue}, \mathit{green}, \mathit{red}}$ or similar sets. Given this limited view of reality, it follows that items may be indiscernible with the knowledge at hand. Taking the information systems described in Section \ref{sec:InfSys} we can think of the situation of interest as described by an information system, where each object $a \in U$ is described by a vector $f(a)$ (row) of attribute values. Let us look at concrete examples.

\begin{example}\label{ex2}
Consider a description of television sets \cite{dg_noninv2}:
\medskip
\begin{center}
\begin{tabular}{|c|c|c|c|c|} \hline
Type & Price & Guarantee & Sound & Screen \\ \hline\hline
1 & high & 24 months & Stereo & 76 \\ \hline
2&low& 6 months & Mono & 66 \\ \hline
3&low& 12 months & Stereo & 36 \\ \hline
4&medium& 12 months & Stereo & 51 \\ \hline
5&medium& 12 months & Stereo & 51 \\ \hline
6&high& 12 months & Stereo & 51 \\ \hline
\end{tabular}
\end{center}

\medskip

Not all objects of $U$ can be distinguished with knowledge expressed in the system; for example, the TV sets $4$ and $5$ have the same properties, i.e. $f(4) = f(5)$. This induces an equivalence relation (\emph{indiscernibility}) $\theta$ on $U$ where $a \mathrel{\theta} b$ \tiff $f(a) = f(b)$. Given a subset $X$ of $U$ and an element $a \in U$, when can we say that certainly $a \in X$ or certainly $a \not\in X$?
In this example, the partition resulting from the system is
\begin{gather*}
\set{1}, \set{2}, \set{3}, \set{4,5}, \set{6}.
\end{gather*}

If $X = \set{2,3,4}$ then
\begin{align*}
& 2 \text{ is certainly in }X, \\
& 6 \text{ is certainly not in }X, \\
& 4 \text{ is possibly in }X, \\
& 5 \text{ is possibly in }X.
\end{align*}\QED
\end{example}

\begin{example}
    Let us consider the information system for actors from Example~\ref{ex:actors}. Let $X\defeq\{a_1,a_3,a_4,a_5,a_9\}$. Then, since $[a_8]\cap X\neq\emptyset$, we can conclude that $a_8$ is possibly at $X$, i.e., $a_8\in\oX$. Similarly for $a_7$. Yet $a_9$ is certainly in $X$, as $[a_9]\subseteq X$, i.e., $a_9\in\uX$. The same with $a_3$. In consequence
    \[
        \oX=\{a_1,a_3,a_4,a_5,a_7,a_8,a_9\}\qtand\uX=\{a_3,a_5,a_9\}.
    \]
    Therefore, the boundary of $X$ is the set $\{a_1,a_4,a_7,a_8\}$.

Next, consider $X'\defeq\ X \setminus \set{a_3,a_5} = \{a_1,a_4,a_9\}$.   Then, $\underline{X'} = \z$ and we cannot say with certainty that $a \in X'$ for any $a \in A$. Dually, if $X'' = A \setminus X' = \set{a_2,a_3,a_5,a_6,a_7,a_8}$, then $\overline{X''} = A$, and any element of $A$ is possibly in $X''$ with the information at hand.
\QED\end{example}

The previous example shows that approximation operators can yield counterintuitive results. If, say, $\theta$ is an equivalence relation on $A$ with two very large classes $X$ and $Y$, and $x \in X, y \in Y$, then $\underline{A \setminus \set{x,y}} = \z$, and $\overline{\set{x,y}} = A$. To handle such situations, in particular, when rough set methods are used for classification or supervised learning, additional statistical methods are required, such as approximation quality, significance testing, or information theoretic entropy. An overview of such methods can be found in \cite{gd_stattech}.

\section{Operationalizing affordances}\label{sec:OpAfford}

A direction on operationalization of affordances was suggested in \cite{dgl09}:

\begin{itemize}
\item ``A formalization of affordance relations needs to provide crisp and fuzzy structures, mechanisms for spatial and temporal change, as well as contextual modeling. ''
\end{itemize}

The basic setup of an affordance relation consists of a set $U$ of an agent's abilities, a set $E$ of features of the environment, and a binary relation $R \subseteq U \times E$.  \citet[p. 189]{chemero2003} writes
\begin{itemize}
\item ``Affordances \ldots are relations between the abilities of organisms and features of the environment. Affordances, that is, have the structure \textsf{Affords--$\phi$ (feature,ability)}''
\end{itemize}

We expand this notion by regarding an affordance in a first step  as a relation $\phi \subseteq A \times O \times E$ where $\phi(a,o,e)$ is interpreted as
\begin{itemize}
\item ``Entity $o$ affords action $\Act_\phi$ to the actor (or perceiver, agent) $a$ in the environment (context)~$e$''.
\end{itemize}
Above, $\Act_{\phi}$ is thought of as an action related to the triple $\phi$ (further below, we give a~more formal explanation).

The initial notion of an affordance is quite coarse, and all three components require further description. Therefore, we extend the concept as follows: suppose that for a set $A$ of actors, a set $O$ of entities or objects, and a set $E$ of environmental factors we have deterministic information systems

\begin{multline*}
\textstyle\I_A = \left\langle A, \Omega_A, \set{V_p^A: p \in \Omega_A}, f_A: A \to \prod_{p \in \Omega_A}V_p^A\right\rangle, \\[+5pt]
\textstyle\I_O = \left\langle O, \Omega_O, \set{V_p^O: p \in \Omega_O}, f_O: O \to \prod_{p \in \Omega_A}V_p^O\right\rangle, \\[+5pt]
\textstyle\I_E = \left\langle E, \Omega_E, \set{V_p^A: p \in \Omega_E}, f_E: E \to \prod_{p \in \Omega_A}V_p^E\right\rangle.
\end{multline*}
Each of these information systems is interpreted as a description, respectively, of actors, entities, or the environment.

\begin{definition}
An \emph{affordance} is a relation
\begin{equation*}
\phi \subseteq \set{\klam{a, f_A(a)}: a \in A} \times \set{\klam{o, f_O(o)}: o \in O} \times \set{\klam{e, f_E(e)}: e \in E}.
\end{equation*}
Thus, an affordance is a ternary relation that holds among actors with properties, objects with properties, and environments (contexts) with properties. \QED
\end{definition}

See Figure \vref{fig:afford} for a pictorial interpretation. A~tuple in $\phi$ is interpreted as ``action $\Act_\phi$ is afforded for actor $a$ described by $f_A(a)$ by entity $o$ described by $f_O(o)$ in the context $e$ described by $f_E(e)$''. To express this and declutter notation we
agree that by \emph{actor} (resp. \emph{object}, \emph{environment}) we always mean $\klam{a,f_A(a)}$ (resp. $\klam{o,f_O(o)}$, $\klam{e,f_E(e)}$). By \emph{an action} we mean an affordance that has been labeled (given a~name), and such it has been recognized as a~possibility.

Let us note that in the definition of an affordance, we use only deterministic systems. Indeterministic systems may be the next step.

\begin{example}
We identify
\begin{enumerate}[label=(\roman*)]
    \item the set of actors $A$ with the set of people,
    \item the set of objects $O$ with the set of various kinds of balls,
    \item the set of environments $E$ with the set of playgrounds.
\end{enumerate}
We define the sets of property labels for each set above in the following way
    \begin{gather*}
        \Omega_A\df\{\mathrm{agility},\mathrm{height}\}\quad\quad\quad
        \Omega_O\df\{\mathrm{diameter},\mathrm{weight}\}\\
        \Omega_E\df\{\mathrm{basketball},\mathrm{handball},\mathrm{soccer}\}
    \end{gather*}
    The values for the elements of  $V_{\mathrm{height}}^A$ and all $V^O$s are the positive rational numbers, with fairly obvious interpretation. Further
    \[
    V_\mathrm{agility}^A\df\{\mathrm{Low},\mathrm{Standard},\mathrm{High},\mathrm{Pro}\}.
    \]
    If the value is $\mathrm{Low}$ the agility is low (e.g., an actor can walk, jump to very low heights, but not run very fast); if $\mathrm{Standard}$ the actor has a <<standard>> level of agility, i.e., can run, jump (but not very high), etc.; if $\mathrm{High}$, the actor has a college varsity member agility; and if $\mathrm{Pro}$ they are on the professional level.

    $V^E\df\{0,1\}$ where the numbers $0$ and $1$ are Booleans, that is they unequivocally indicate if the environment (the playground) is adapted for one of the three activities: 0 means No, 1 means Yes. For example, for $f_E\colon E\to V_{\mathrm{b}}^E\times V_{\mathrm{h}}^E\times V_{\mathrm{s}}^E$ (where the letters `b', `h` and `s' abbreviate, respectively, `basketball', `handball' and `soccer'), if $f_E(e)=\klam{1,0,1}$ it means that environment is adapted for playing basketball and soccer, but not for handball. Here, we treat \emph{adapted for playing basketball} (or any of two other games) as a kind of high-level, or high-order, or aggregate property which might be described by means of simpler ones, e.g., \emph{being flat}, \emph{being horizontally leveled}, \emph{having at least one basketball hoop}, etc. However, we simplify it as the refinement of a description of an environment (an actor, an object) seems to be dependent on the goal, or the background level of the interpreter of the components of an affordance.

 We can now look at the product of the three information systems and represent an affordance by a set of vectors. In Figure~\vref{fig:dunk}, an affordance $\varphi$ consists of all triples of blue vectors.

Our intention is that the triples represent an affordance allowing for an action of \emph{dunking a ball}. All objects in each information system are concrete entities (not equivalence classes). The actors $a_6$ and $a_{10}$ have the same properties but only one is in the affordance, which means that there are some hidden factors that influence the second actor, for example, knee injury.\QED
\end{example}

Observe that the product of information systems constitutes a relational database in Codd's sense \cite{Codd-ARMODFLSDB}.

\afterpage{%
\clearpage
\begin{landscape}
\thispagestyle{empty}
\vspace*{\fill}
\begin{figure}[htb]
\centering
\begin{tikzpicture}
\tikzset{
            affordance/.pic={
                code={
\draw [ultra thick,decorate,
decoration = {calligraphic brace, raise = 3pt, amplitude = 4pt,mirror}] (0,1.1) --  (0,-0.1) node[pos=0.5,left=10pt,black]{\huge $\phi=$};
\fill [mildorange] (0,1) rectangle (1,0);
\fill [mildgold] (1.5,1) rectangle (2.5,0);
\fill [cyan] (3,1) rectangle (4,0);
\draw [ultra thick,decorate, decoration = {calligraphic brace, raise = 3pt, amplitude = 4pt}] (4,1.1) --  (4,-0.1);
}
}
}


\node [shape=rectangle,scale=0.85,inner sep=0pt] (act)  {
\begin{tabular}{ |s|cccccc| }
\hline
\rowcolor{header!25} \multicolumn{7}{|c|}{\textsc{\large Information system for actors}} \\
\hline
\rowcolor{labelsraw!25}\diagbox{$A/_{\sim}$}{$\Omega_A$} & $p_{1}$ & $p_{2}$ & $p_{3}$ & $p_{4}$ & $p_{5}$ & $p_{6}$ \\
\hline
\rowcolor{mildgold} $[a_1]$ & $v_{a_1}^{p_1}$ & $v_{a_1}^{p_2}$ & $v_{a_2}^{p_3}$ & $v_{a_2}^{p_4}$ & $v_{a_2}^{p_5}$ & $v_{a_2}^{p_6}$ \\
\rowcolor{cyan}
$[a_2]$ & $v_{a_2}^{p_1}$ & $v_{a_2}^{p_2}$ & $v_{a_2}^{p_3}$ & $v_{a_2}^{p_4}$ & $v_{a_2}^{p_5}$ & $v_{a_2}^{p_6}$\\
$[a_3]$   & $v_{a_3}^{p_1}$ & $v_{a_3}^{p_2}$ & $v_{a_3}^{p_3}$ & $v_{a_3}^{p_4}$ & $v_{a_3}^{p_5}$ & $v_{a_3}^{p_6}$ \\
\rowcolor{mildorange} $[a_4]$  & $v_{a_4}^{p_1}$ & $v_{a_4}^{p_2}$ & $v_{a_4}^{p_3}$ & $v_{a_4}^{p_4}$ & $v_{a_4}^{p_5}$ & $v_{a_4}^{p_6}$ \\
$[a_5]$ &  $v_{a_5}^{p_1}$ & $v_{a_5}^{p_2}$ & $v_{a_5}^{p_3}$ & $v_{a_5}^{p_4}$ & $v_{a_5}^{p_5}$ & $v_{a_5}^{p_6}$ \\
$[a_6]$ & $v_{a_6}^{p_1}$ & $v_{a_6}^{p_2}$ & $v_{a_6}^{p_3}$ & $v_{a_6}^{p_4}$ & $v_{a_6}^{p_5}$ & $v_{a_6}^{p_6}$    \\
$[a_7]$ & $v_{a_7}^{p_1}$ & $v_{a_7}^{p_2}$ & $v_{a_7}^{p_3}$ & $v_{a_7}^{p_4}$ & $v_{a_7}^{p_5}$ & $v_{a_7}^{p_6}$ \\
\hline
\end{tabular}
};

\node [shape=rectangle,scale=0.85,inner sep=0pt,right = 1cm of act.north east,anchor=north west] (obj)  {
\begin{tabular}{ |s|ccccccc| }
\hline
\rowcolor{header!30} \multicolumn{8}{|c|}{\textsc{\large Information system for entities}} \\
\hline
\rowcolor{labelsraw!25}\diagbox{$O/_{\sim}$}{$\Omega_O$} & $q_{1}$ & $q_{2}$ & $q_{3}$ & $q_{4}$ & $q_{5}$ & $q_{6}$ & $q_{7}$ \\
\hline
$[o_1]$ & $v_{o_1}^{q_1}$ & $v_{o_1}^{q_2}$ & $v_{o_1}^{q_3}$ & $v_{o_1}^{q_4}$ & $v_{o_1}^{q_5}$ & $v_{o_1}^{q_6}$ & $v_{o_1}^{q_7}$\\
\rowcolor{cyan}
$[o_2]$ & $v_{o_2}^{q_1}$ & $v_{o_2}^{q_2}$ & $v_{o_2}^{q_3}$ & $v_{o_2}^{q_4}$ & $v_{o_2}^{q_5}$ & $v_{o_2}^{q_6}$ & $v_{o_2}^{q_7}$\\
$[o_3]$   & $v_{o_3}^{q_1}$ & $v_{o_3}^{q_2}$ & $v_{o_3}^{q_3}$ & $v_{o_3}^{q_4}$ & $v_{o_3}^{q_5}$ & $v_{o_3}^{q_6}$ & $v_{o_3}^{q_7}$ \\
$[o_4]$  & $v_{o_4}^{q_1}$ & $v_{o_4}^{q_2}$ & $v_{o_4}^{q_3}$ & $v_{o_4}^{q_4}$ & $v_{o_4}^{q_5}$ & $v_{o_4}^{q_6}$ & $v_{o_4}^{q_7}$ \\
\rowcolor{mildgold} $[o_5]$ &  $v_{o_5}^{q_1}$ & $v_{o_5}^{q_2}$ & $v_{o_5}^{q_3}$ & $v_{o_5}^{q_4}$ & $v_{o_5}^{q_5}$ & $v_{o_5}^{q_6}$ & $v_{o_5}^{q_7}$ \\
$[o_6]$ & $v_{o_6}^{q_1}$ & $v_{o_6}^{q_2}$ & $v_{o_6}^{q_3}$ & $v_{o_6}^{q_4}$ & $v_{o_6}^{q_5}$ & $v_{o_6}^{q_6}$ & $v_{o_6}^{q_7}$   \\
$[o_7]$ & $v_{o_7}^{q_1}$ & $v_{o_7}^{q_2}$ & $v_{o_7}^{q_3}$ & $v_{o_7}^{q_4}$ & $v_{o_7}^{q_5}$ & $v_{o_7}^{q_6}$ & $v_{o_7}^{q_7}$ \\
\rowcolor{mildorange} $[o_8]$ & $v_{o_8}^{q_1}$ & $v_{o_8}^{q_2}$ & $v_{o_8}^{q_3}$ & $v_{o_8}^{q_4}$ & $v_{o_8}^{q_5}$ & $v_{o_8}^{q_6}$ & $v_{o_8}^{q_7}$ \\
\hline
\end{tabular}
};

\node [shape=rectangle,scale=0.85,inner sep=0pt,right = 1cm of obj.north east,anchor=north west] (env) {
\begin{tabular}{ |s|ccccccc| }
\hline
\rowcolor{header!30} \multicolumn{8}{|c|}{\textsc{Information system for environments}} \\
\hline
\rowcolor{labelsraw!25}\diagbox{$E/_{\sim}$}{$\Omega_E$} & $r_{1}$ & $r_{2}$ & $r_{3}$ & $r_{4}$ & $r_{5}$ & $r_{6}$ & $r_{7}$ \\
\hline
$[e_1]$ & $v_{e_1}^{r_1}$ & $v_{e_1}^{r_2}$ & $v_{e_1}^{r_3}$ & $v_{e_1}^{r_4}$ & $v_{e_1}^{r_5}$ & $v_{e_1}^{r_6}$ & $v_{e_1}^{r_7}$\\
$[e_2]$ & $v_{e_2}^{r_1}$ & $v_{e_2}^{r_2}$ & $v_{e_2}^{r_3}$ & $v_{e_2}^{r_4}$ & $v_{e_2}^{r_5}$ & $v_{e_2}^{r_6}$ & $v_{e_2}^{r_7}$\\
\rowcolor{cyan} $[e_3]$   & $v_{e_3}^{r_1}$ & $v_{e_3}^{r_2}$ & $v_{e_3}^{r_3}$ & $v_{e_3}^{r_4}$ & $v_{e_3}^{r_5}$ & $v_{e_3}^{r_6}$ & $v_{e_3}^{r_7}$ \\
\rowcolor{mildorange} $[e_4]$  & $v_{e_4}^{r_1}$ & $v_{e_4}^{r_2}$ & $v_{e_4}^{r_3}$ & $v_{e_4}^{r_4}$ & $v_{e_4}^{r_5}$ & $v_{e_4}^{r_6}$ & $v_{e_4}^{r_7}$ \\
$[e_5]$ &  $v_{e_5}^{r_1}$ & $v_{e_5}^{r_2}$ & $v_{e_5}^{r_3}$ & $v_{e_5}^{r_4}$ & $v_{e_5}^{r_5}$ & $v_{e_5}^{r_6}$ & $v_{e_5}^{r_7}$ \\
$[e_6]$ & $v_{e_6}^{r_1}$ & $v_{e_6}^{r_2}$ & $v_{e_6}^{r_3}$ & $v_{e_6}^{r_4}$ & $v_{e_6}^{r_5}$ & $v_{e_6}^{r_6}$ & $v_{e_6}^{r_7}$   \\
$[e_7]$ & $v_{e_7}^{r_1}$ & $v_{e_7}^{r_2}$ & $v_{e_7}^{r_3}$ & $v_{e_7}^{r_4}$ & $v_{e_7}^{r_5}$ & $v_{e_7}^{r_6}$ & $v_{e_7}^{r_7}$ \\
$[e_8]$ & $v_{e_8}^{r_1}$ & $v_{e_8}^{r_2}$ & $v_{e_8}^{r_3}$ & $v_{e_8}^{r_4}$ & $v_{e_8}^{r_5}$ & $v_{e_8}^{r_6}$ & $v_{e_8}^{r_7}$ \\
\rowcolor{mildgold} $[e_9]$  &  $v_{e_9}^{r_1}$ & $v_{e_9}^{r_2}$ & $v_{e_9}^{r_3}$ & $v_{e_9}^{r_4}$ & $v_{e_9}^{r_5}$ & $v_{e_9}^{r_6}$ & $v_{e_9}^{r_7}$ \\
\hline
\end{tabular}
};

\node [right = 0.25cm of act.east] {$\times$};
\node [right = 0.25cm of obj.east] {$\times$};

\pic [below = 2cm of obj,xshift=-3cm] {affordance};

\end{tikzpicture}\caption{The triples of vectors of the same color constitute the affordance $\phi$ and its corresponding action $\Act_\phi$. We identify, respectively, actors, objects, and environments that cannot be distinguished by available properties.}\label{fig:afford}
\end{figure}
\vfill
\end{landscape}

\begin{landscape}
\thispagestyle{empty}
\vspace*{\fill}
\begin{figure}[htb]
\centering
\begin{tikzpicture}
\node [shape=rectangle,scale=0.85,inner sep=0pt] (act)  {
\begin{tabular}{ |s|cc| }
\hline
\rowcolor{header!25} \multicolumn{3}{|c|}{\textsc{\large Information system for actors}} \\
\hline
\rowcolor{labelsraw!25}\diagbox{$A$}{$\Omega_A$} & height (cm) & agility \\
\hline
$a_1$ & 201 & L  \\
$a_2$ & 155 & P\\
\rowcolor{cyan} $a_3$ & 192 & H\\
$a_4$  & 190 & L\\
$a_5$ &  178 & S\\
\rowcolor{cyan} $a_6$ & 181 & P\\
$a_7$ & 190 & L \\
$a_8$ & 201 & L  \\
$a_9$ &  178 & S\\
$a_{10}$ & 181 & P\\
\hline
\end{tabular}
};

\node [shape=rectangle,scale=0.85,inner sep=0pt,right = 1cm of act.north east,anchor=north west] (obj)  {
\begin{tabular}{ |s|cc| }
\hline
\rowcolor{header!25} \multicolumn{3}{|c|}{\textsc{\large Information system for objects}} \\
\hline
\rowcolor{labelsraw!25}\diagbox{$O$}{$\Omega_O$} & diameter (in cm) & weight (in kg) \\
\hline
\rowcolor{cyan} $o_1$ & 24.2 & 0.62 \\
$o_2$ & 48.2 & 0.75\\
\rowcolor{cyan} $o_3$ & 6.74 & 0.06\\
\rowcolor{cyan} $o_4$  & 20 & 0.41\\
$o_5$ &  25.1 & 50\\
\rowcolor{cyan} $o_6$ & 0.4 & 0.02\\
\hline
\end{tabular}
};

\node [shape=rectangle,scale=0.85,inner sep=0pt,right = 1cm of obj.north east,anchor=north west] (env) {
\begin{tabular}{ |s|ccc| }
\hline
\rowcolor{header!25} \multicolumn{4}{|c|}{\textsc{\large Information system for environments}} \\
\hline
\rowcolor{labelsraw!25}\diagbox{$E$}{$\Omega_E$} & basketball & handball & soccer \\
\hline
\rowcolor{cyan} $e_1$ & 1 & 1 & 1\\
\rowcolor{cyan} $e_2$  & 1 & 1 & 0\\
\rowcolor{cyan} $e_3$ & 1 & 0 & 1\\
\rowcolor{cyan}$e_4$ & 1 & 0 & 0\\
$e_5$ & 0 & 1 & 1\\
$e_6$ & 0 & 1 & 0\\
$e_7$ & 0 & 0 & 1\\
$e_8$ & 0 & 0 & 0\\
\rowcolor{cyan} $e_9$ & 1 & 1 & 1\\
$e_{10}$ & 0 & 1 & 1\\
\hline
\end{tabular}
};

\smallskip

\node [right = 0.25cm of act.east] {$\times$};
\node [right = 0.25cm of obj.east] {$\times$};
\end{tikzpicture}\caption{The blue triples of vectors constitute an action $\mathtt{dunk}_{\varphi}$ of dunking a~ball.}\label{fig:dunk}\label{fig:afford1}
\end{figure}
\vfill
\end{landscape}
}

\section{Reasoning with affordances: modal-style operators}\label{sec:modop}

Having conceptualized affordances, we then turn to and focus on the problem of reasoning with them as ternary relations. As we will see, the equivalence classes of entities from three different sorts will play a crucial role in incorporating the granularity of perceptions of the world into the formal framework. In this section however, we begin with the definitions of crisp modal-style operators. As such these are independent from the affordance environment above, and can be laid out in a~completely abstract setting. The logic we envisage is the binary version of an extension of classical propositional modal logic with a possibility operator and its dual necessity operator \cite{Blackburn-et-al-ML}. The expressive power of this logic is limited: even though it can  express relational properties such as reflexivity, symmetry, and transitivity, it cannot express, for example, irreflexivity. To extend the expressive power of the classical logic, a ``sufficiency'' counterpart to the classical necessity operator was independently introduced by \citet{Humberstone-IW} and~\citet{Gargov-et-al}. The algebraic properties of such an operator and representation properties of such a~``possibility-sufficiency logic'' were investigated for the unary case by \citet{Duntsch-et-al-MAATL} and extended to the binary case by \citet{Duntsch-et-al-AMLWBO} where further details can be found. Besides the current investigation of affordances, a further example of an application of this logic is the study of formalizing a notion of \emph{betweeness} \cite{dgm23}.  We use the semantics of the logic developed in \cite{Duntsch-et-al-AMLWBO} to investigate properties of affordance relations. The setup we shall use is the following:

\begin{enumerate}[label=(\roman*)]
\item $A,O,E$ are non-empty sets (we forget about actors--objects--environments interpretation for now),
\item $\phi \subseteq A \times O \times E$ is a ternary relation,
\item $X \subseteq A, Y \subseteq O, Z \subseteq E$.
\end{enumerate}

We define six operators \label{page:possibility-operators}
\begin{align}
\poss{\phi}_{{E}}\colon 2^{A} \times 2^{O} \to 2^{E}, && \poss{\phi}_{{E}}(X,Y) &\df \set{e \in {E}:( X \times Y \times \set{e}) \cap {\phi} \neq \z}, \label{possE}\\
\suff{\phi}_{E}\colon 2^{A} \times 2^{O} \to 2^{E}, && \suff{\phi}_{E}(X,Y) &\df \set{e \in {E}: X \times Y \times \set{e} \subseteq \phi}, \label{suffE}\\
\poss{\phi}_{{O}}\colon 2^{A} \times 2^{E} \to 2^{O}, && \poss{\phi}_{{O}}(X,Z) &\df \set{o \in {O}:( X \times \set{o} \times Y) \cap {\phi} \neq  \z}, \label{possO}\\
\suff{\phi}_{O}\colon 2^{A} \times 2^{E} \to 2^{O}, && \suff{\phi}_{O}(X,Z) &\df \set{o \in {O}: X \times \set{o} \times Y \subseteq \phi}, \label{suffO} \\
\poss{\phi}_{A}\colon 2^{O} \times 2^{E} \to 2^{A}, && \poss{\phi}_{{A}}(Y,Z) &\df \set{a \in {A}:(\set{a} \times Y \times Z) \cap {\phi} \neq \z}, \label{possA}\\
\suff{\phi}_{A}\colon 2^{O} \times 2^{E} \to 2^{A}, && \suff{\phi}_{A}(Y,Z) &\df \set{a \in {A}: \set{a} \times Y \times Z \subseteq \phi}. \label{suffA}
\end{align}

We interpret these as follows:
\begin{itemize}
\item $e \in \poss{\phi}_{E}(X,Y)$ \tiff there are $a \in X, o \in Y$ such that $a$, $o$ and $e$ are $\phi$-related.
\item $e \in \suff{\phi}_{E}(X,Y)$ \tiff for all $a \in X, o \in Y$, the triple $\klam{a,o,e}$ is in $\varphi$. In other words, for objects $a$ and $o$ to be $\varphi$-related to $e$ is sufficient that the pair $\klam{a,o}$ is in $X\times Y$. For this reason, $\suff{\phi}_{E}$ is sometimes called a \emph{sufficiency operator}.
\end{itemize}
The operators for $O$ and $A$ are interpreted in analogous way. From now on, to simplify things, we will focus on the operators for~$E$. All results concerning this transfer directly to the possibility and sufficiency operators for $A$ and $O$ (and other operators defined from these).

To gauge the meaning of the operators in the affordance setting let us consider several examples. Generally, whenever we have an operationalisation of a situation represented by an information table, we assume that the chosen semantics make sense and are trustworthy.

\begin{example}\label{ex:poss-suff}
    Consider the following scenario:
        \begin{itemize}
        \item $A\defeq\text{a set of pets}$, $D\subseteq A$ is the subset of dogs,
        \item $O\defeq\text{a set of pet toys}$, $S\subseteq O$ is the subset of sticks,
        \item $\varphi\defeq\text{the affordance of fetching}$.
        \end{itemize}
        Then,
        \begin{itemize}
         \item $e\in\poss{\varphi}_E(D,S)$ means that it is possible for a dog to fetch a stick in the environment $e$.
         \item $e\in\suffe{\varphi}(D,S)$ means that the environment $e$ affords any dog to carry any stick.
         \end{itemize}\QED
\end{example}
As we can see from the example above, possibility and sufficiency make perfect sense.

The dual operators of $\poss{\phi}$ and $\suff{\phi}$ are, respectively, $\nec{\phi}_{E}$ and $\dusuff{\phi}_{E}$ , where
\begin{align*}
\nec{\phi}_{E}(X,Y) &\df {E} \setminus \poss{\phi}_{E}(A \setminus X, O\setminus Y),\\
\dusuff{\phi}_{E}(X,Y)&\df E\setminus \suff{\phi}_{E}(A \setminus X, O\setminus Y).
\end{align*}
It is straightforward to see that
\begin{align}\label{necessity}
e \in \nec{\phi}_{E}(X,Y) &{}\Iff (\forall u\in A)(\forall v\in O)(\klam{u,v,e} \in \phi \Implies u \in X \tor v \in Y), \\
e \in \dusuff{\phi}_{E}(X,Y) &{}\Iff\left[(A\setminus X)\times(O\setminus{Y})\times\{e\}\right]\cap-\varphi\neq\emptyset.
\end{align}
The first condition expresses that for $\klam{u,v,e} \in \phi$ it is necessary that $u \in X \tor v \in Y$. The dual of the sufficiency operator is, as always, harder to interpret in an intuitive way. In this case, we may say (roughly) that $e \in \dusuff{\phi}_{E}(X,Y)$ holds when there are $a$ beyond $X$ and $o$ beyond $Y$ for which $\klam{a,o,e}\notin\varphi$.

\begin{example}
Let us use one more time the interpretation from Example~\ref{ex:poss-suff}. Then,
\begin{itemize}
    \item $e\in\nece{\varphi}(D,S)$ means that for any animal $a$ that fetches a toy $t$ in the environment $e$, either $a$ is a~dog or $t$ is a~stick. In other words, if you fetch a toy in $e$, you must be either a dog, or you must carry a stick. Putting it more sensibly, in $e$ only dogs can fetch sticks only. It is not easy to imagine that kind of environment, yet the interpretation of the operator is sensible too.
    \item $e \in \dusuff{\phi}_{E}(D,S)$ means that there is a pet $a$ which is not a dog, and there is a toy $o$ which is not a stick, and $a$ does not fetch $o$.
\end{itemize}
The second point confirms the awkwardness of the dual of the sufficiency operator.\QED
\end{example}

Analogously to \cite[Lemma 5.2]{dgm23} we have the conditions for the operators associated with respective sets.

\begin{lemma}
\begin{enumerate}\itemsep+4pt
\item $\suff{\phi}_{E}(X,Y) \subseteq \poss{\phi}_{E}(X,Y)$ for all non-empty $X \subseteq A, Y \subseteq O$.
\item $\suff{\phi}_{O}(X,Y) \subseteq \poss{\phi}_{O}(X,Y)$ for all non-empty $X \subseteq A, Y \subseteq E$.
\item $\suff{\phi}_{A}(X,Y) \subseteq \poss{\phi}_{A}(X,Y)$ for all non-empty $X \subseteq O, Y \subseteq E$.
\end{enumerate}
\end{lemma}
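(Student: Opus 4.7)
The proof is a direct unpacking of the two definitions, with the non-emptiness hypothesis doing all the real work. The plan is to fix an element $e$ in the sufficiency set and explicitly witness that the same $e$ satisfies the possibility condition by picking any pair from the guaranteed non-empty product.

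\medskip

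\noindent\textbf{Plan for (1).} Fix non-empty $X \subseteq A$ and $Y \subseteq O$, and let $e \in \suff{\phi}_{E}(X,Y)$. By the defining equation \eqref{suffE}, this means $X \times Y \times \set{e} \subseteq \phi$. Since $X \neq \z$ and $Y \neq \z$, I can choose some $a \in X$ and $o \in Y$; then the triple $\klam{a,o,e}$ lies in $X \times Y \times \set{e}$, hence in $\phi$. Therefore $\klam{a,o,e} \in (X \times Y \times \set{e}) \cap \phi$, which witnesses $(X \times Y \times \set{e}) \cap \phi \neq \z$. By definition \eqref{possE}, this is exactly $e \in \poss{\phi}_{E}(X,Y)$.

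\medskip

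\noindent\textbf{Plan for (2) and (3).} These are entirely symmetric: one runs the same argument with the roles of the sorts permuted, using \eqref{possO}--\eqref{suffO} for part (2) and \eqref{possA}--\eqref{suffA} for part (3). In each case the non-emptiness of the two ``input'' sets allows a witness pair to be chosen from the Cartesian product appearing in the sufficiency condition, which is then immediately seen to lie in the possibility set.

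\medskip

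\noindent\textbf{Where difficulty could hide.} There is no real obstacle; the only subtle point is that the non-emptiness hypothesis cannot be dropped. If, say, $X = \z$, then $X \times Y \times \set{e} = \z \subseteq \phi$ holds vacuously, so $\suff{\phi}_{E}(\z, Y) = E$, whereas $(X \times Y \times \set{e}) \cap \phi = \z$, so $\poss{\phi}_{E}(\z, Y) = \z$; the inclusion then fails whenever $E \neq \z$. Thus the hypothesis is both used and tight. Given this, the three items reduce to the same one-line ``pick a witness'' argument and need no further structural input from the affordance setting.
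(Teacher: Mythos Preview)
Your proof is correct and matches the paper's approach exactly. The paper does not give a detailed argument for this lemma; it simply remarks that the three items ``express that on non-empty sets, a universal statement implies an existential statement,'' which is precisely the witness-picking argument you spelled out (and your observation on the tightness of the non-emptiness hypothesis is a useful addition).
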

These express that on non-empty sets, a universal statement implies an existential statement. We define additional operators derived from the operators above:
\begin{align}
\suff{\phi}_{E}^*(X,Y) &\df \suff{\phi}_{E}(A \setminus X, O \setminus Y), \\
\necu{\phi}_{E}(X,Y) &\df \nec{\phi}_{E}(X,Y) \cap \suff{\phi}_{E}^*(X,Y).
\end{align}
The dual of $\necu{\phi}_{E}$ is denoted by $\possu{\phi}_{E}$, so that
\begin{gather}
\possu{\phi}_{E}(X,Y) = \poss{\phi}_{E}(X,Y) \cup (E \setminus \suff{\phi}_{E}(X,Y)).
\end{gather}
and therefore,
\begin{equation}\label{possu}
\begin{split}
e \in \possu{\phi}_{E}(X,Y) &{}\Iff (X \times Y \times \set{e}) \cap \phi \neq \z \tor (X \times Y \times \set{e}) \not\subseteq \phi\\
&{}\Iff (X \times Y \times \set{e}) \cap \phi \neq \z \tor (X \times Y \times \set{e}) \cap-\phi\neq\z.
\end{split}
\end{equation}
Analogously, we define the corresponding operators for $O$ and $A$.

\begin{example}
    Let us look at the concrete example of a~ternary relation in the form of betweenness that was studied by us in the setting of possibility-sufficiency algebras in \cite{dgm23}. $B(a,o,e)$ is interpreted as follows: the point $o$ is between (in the non-strict sense) the points $a$ and $e$. Then $e\in\possu{B}(X,Y)$ \tiff either there are $x\in X$ and $y\in Y$ such that $B(x,e,y)$ or either there are $x\in X$ and $y\in Y$ such that $\neg B(x,e,y)$. Then $\possu{B}(X,Y)=\emptyset$ \tiff at least one of the sets is empty, and $\possu{B}(X,Y)$ is the set of all points otherwise.\QED
\end{example}

\begin{example}
    Again, let us check the semantics of $\possu{\phi}_{E}$ from the perspective of the interpretation from Example~\ref{ex:poss-suff}. The statement $e\in\possu{\phi}_{E}(D,S)$ means that either there are a dog $a$ and a stick $o$ such that $a$ fetches $o$, or there are a dog $a$ and a stick $o$ such that $a$ doesn't fetch $o$. Thus, in the latter case, at least there are a dog and a stick (and maybe just the dog is gnawing at the stick).\QED
\end{example}

In general, the statement $e \in \possu{\phi}_{E}(X,Y)$ says: either there are $a\in X$ and $o\in Y$ such that $\klam{a,o,e}\in\varphi$ or there are $a\in X$ and $o\in Y$ such that $\klam{a,o,e}\notin\varphi$. In consequence, for any $X\neq\emptyset\neq Y$, $\possu{\phi}_{E}(X,Y)=E$. Clearly, if $e\in E$, then if $(X \times Y \times \set{e}) \cap \phi=\emptyset$, it must be the case that $(X \times Y \times \set{e}) \subseteq-\phi$. So, by the assumption that both $X$ and $Y$ are non-empty we obtain that $(X \times Y \times \set{e}) \cap-\phi\neq\z$. Thus we have
\begin{equation}
\possu{\phi}_{E}(X,Y)=\begin{cases}
    E\quad\text{if $X\neq\emptyset\neq Y$},\\
    \emptyset \quad\text{otherwise}.
\end{cases}
\end{equation}
So, by duality, we have
\[
\necu{\phi}_{E}(X,Y)=\begin{cases}
    \emptyset\quad\text{if $X\neq A$ and $Y\neq O$},\\
    E \quad\text{otherwise}.
\end{cases}
\]
From this we obtain lemmas~\ref{lem:all}, \ref{lem:about-[phi]]_E} and \ref{lem:about-<phi>>_E} below.

\begin{lemma}\label{lem:all}
$\necu{\phi}_{E}(X,Y) \neq \z$ implies $X = A$ or $Y = O$.
\end{lemma}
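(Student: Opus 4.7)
The plan is to argue by contraposition, relying directly on the explicit formula for $\necu{\phi}_{E}$ that has just been worked out in the paragraph preceding the lemma. So I would assume that neither $X = A$ nor $Y = O$, and show that $\necu{\phi}_{E}(X,Y) = \z$.

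First, from $X \neq A$ and $Y \neq O$ I get that $A \setminus X$ and $O \setminus Y$ are both non-empty. Second, I apply the already-established characterization of $\possu{\phi}_{E}$: since its two arguments are non-empty, the formula displayed just above the lemma gives $\possu{\phi}_{E}(A \setminus X, O \setminus Y) = E$. This step uses the key observation (already justified in the text via~\eqref{possu}) that whenever both arguments are non-empty, every $e \in E$ satisfies either $(X' \times Y' \times \{e\}) \cap \phi \neq \z$ or $(X' \times Y' \times \{e\}) \cap (-\phi) \neq \z$, simply because a single triple $\langle a, o, e \rangle$ with $a \in X'$, $o \in Y'$ must lie in exactly one of $\phi$ and $-\phi$.

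Third, I invoke the duality defining $\necu{\phi}_{E}$ as the dual of $\possu{\phi}_{E}$, i.e.\
\[
\necu{\phi}_{E}(X,Y) = E \setminus \possu{\phi}_{E}(A \setminus X, O \setminus Y),
\]
which yields $\necu{\phi}_{E}(X,Y) = E \setminus E = \z$. This contradicts the hypothesis $\necu{\phi}_{E}(X,Y) \neq \z$, completing the contrapositive argument.

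I do not expect any real obstacle here: the entire content of the lemma is the statement that the second case of the displayed formula for $\necu{\phi}_{E}$ above implies $X = A$ or $Y = O$, and the formula itself has already been derived. The only thing worth being careful about is keeping the duality straight (the complements appear on the arguments of the dual operator, not on the outer set), but this is purely bookkeeping.
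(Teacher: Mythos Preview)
Your proposal is correct and follows essentially the same route as the paper: the paper derives the displayed case formula for $\necu{\phi}_{E}$ from the $\possu{\phi}_{E}$ formula via duality and then simply reads off Lemma~\ref{lem:all} from it, which is exactly your contrapositive argument spelled out in slightly more detail.
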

\begin{lemma}\label{lem:about-[phi]]_E}
\begin{enumerate}
\item If $X = A$ or $Y = O$, then $\necu{\phi}_{E}(X,Y) = E$.
\item Furthermore,
\begin{align*}
\necu{\phi}_{E}(\z,Y) &=
\begin{cases}
E, &\text{if } Y = O, \\
\z, &\text{otherwise.}
\end{cases} \\
\necu{\phi}_{E}(X,\z) &=
\begin{cases}
E, &\text{if } X = A, \\
\z, &\text{otherwise.}
\end{cases}
\end{align*}
\end{enumerate}
\end{lemma}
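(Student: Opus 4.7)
The plan is to read off both parts directly from the closed-form description
\[
\necu{\phi}_{E}(X,Y)=\begin{cases}
\emptyset, & \text{if } X \neq A \text{ and } Y \neq O,\\
E, & \text{otherwise},
\end{cases}
\]
which was derived in the paragraph immediately preceding the lemma (by dualising the corresponding two-valued expression for $\possu{\phi}_{E}$). In other words, the entire content of Lemma~\ref{lem:about-[phi]]_E} is just an unpacking of this dichotomy into the particular boundary cases singled out in the statement.

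For part (1), if $X = A$ or $Y = O$, then the conjunction ``$X \neq A$ and $Y \neq O$'' fails, so we land in the \emph{otherwise} branch and obtain $\necu{\phi}_{E}(X,Y) = E$ at once.

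For part (2), I would argue by cases on $Y$ (and then symmetrically on $X$). If $Y = O$, then part (1) already gives $\necu{\phi}_{E}(\emptyset, Y) = E$. If $Y \neq O$, then because $A$ is non-empty (standing assumption of Section~\ref{sec:modop}), we have $\emptyset \neq A$, hence both conditions ``$X \neq A$'' and ``$Y \neq O$'' hold, and the closed form yields $\necu{\phi}_{E}(\emptyset, Y) = \emptyset$. The second equality $\necu{\phi}_{E}(X, \emptyset) = \ldots$ follows by the same reasoning after swapping the roles of $A$, $O$ and of $X$, $Y$, using that $O$ is non-empty.

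There is essentially no obstacle: the only place where one must pay attention is the tacit appeal to $A \neq \emptyset$ and $O \neq \emptyset$ to conclude $\emptyset \neq A$ and $\emptyset \neq O$; without these standing assumptions part (2) would collapse. If one prefers a from-scratch derivation avoiding the boxed display, the same case analysis can be carried out on the definition $\necu{\phi}_{E}(X,Y) \df \nec{\phi}_{E}(X,Y) \cap \suff{\phi}_{E}^*(X,Y)$ by unwinding both conjuncts via the equivalences in~\eqref{necessity} and the definition~\eqref{suffE}, again with nothing more than careful bookkeeping of complements.
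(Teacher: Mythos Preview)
Your proposal is correct and mirrors the paper's own treatment exactly: the paper derives the closed-form dichotomy for $\necu{\phi}_{E}(X,Y)$ and then simply states ``From this we obtain lemmas~\ref{lem:all}, \ref{lem:about-[phi]]_E} and \ref{lem:about-<phi>>_E} below,'' giving no separate proof. Your explicit case analysis, including the observation that $A,O\neq\emptyset$ is needed for part~(2), is a faithful (and slightly more detailed) unpacking of that one-line justification.
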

\begin{lemma}\label{lem:about-<phi>>_E}
Let $X \subseteq A$ and $Y \subseteq O$. Then,
\begin{align*}
\possu{\phi}_{E}(X,O) &=
\begin{cases}
\z, &\text{if } X = \z, \\
E, &\text{otherwise,}
\end{cases} \\
\possu{\phi}_{E}(A,Y) &=
\begin{cases}
\z, &\text{if } Y = \z, \\
E, &\text{otherwise.}
\end{cases}
\end{align*}
\end{lemma}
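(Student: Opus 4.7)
The plan is to derive the lemma directly from the closed-form description of $\possu{\phi}_{E}$ that was obtained in the paragraph immediately preceding the statement. Recall that we have already established
\[
\possu{\phi}_{E}(X,Y)=\begin{cases} E & \text{if } X\neq\emptyset\neq Y,\\ \emptyset & \text{otherwise.}\end{cases}
\]
Combined with the standing convention of Section~\ref{sec:modop} that $A$, $O$, and $E$ are non-empty, this reduces both clauses of the lemma to a one-line instantiation, so the role of the proof is simply to carry out that instantiation and record the boundary cases.

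First I would verify the identity for $\possu{\phi}_{E}(X,O)$. Because the second argument $O$ is non-empty by the blanket assumption, the dichotomy above depends only on whether the first argument is empty. If $X=\z$, the ``otherwise'' branch yields $\z$; if $X\neq\z$, both arguments are non-empty and the formula delivers $E$. This is exactly the first clause. The second clause, for $\possu{\phi}_{E}(A,Y)$, will be handled symmetrically: since $A$ is never empty, the value is governed entirely by whether $Y=\z$ or not, giving $\z$ in the former case and $E$ in the latter.

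I do not anticipate a real obstacle, since the genuine content of the argument lies in the two-valued characterization of $\possu{\phi}_{E}$, already derived via \eqref{possu} together with the observation that on a non-empty Cartesian product $X\times Y\times\set{e}$ either the intersection with $\phi$ is non-empty or the entire product is contained in $-\phi$. The only point requiring care is the explicit invocation of $O\neq\z$ and $A\neq\z$ when setting $Y=O$ and $X=A$; without these standing assumptions each line of the lemma would split into an additional degenerate subcase, which is precisely the reason the statement is phrased as a single clean dichotomy.
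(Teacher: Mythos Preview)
Your proposal is correct and follows exactly the paper's own route: the paper derives the closed-form description of $\possu{\phi}_{E}$ and then states that Lemmas~\ref{lem:all}, \ref{lem:about-[phi]]_E}, and \ref{lem:about-<phi>>_E} follow from it, without giving separate proofs. Your instantiation with $Y=O$ and $X=A$, together with the standing non-emptiness assumptions on $A$, $O$, $E$, is precisely the intended one-line derivation.
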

The latest of the three lemmas shows that fixing one component to the whole set makes $\possu{\phi}_{E}$ behave like the unary discriminator.

\begin{lemma}\label{necandsuff}
    For every $X\subseteq A$ and $Y\subseteq O$,
    \begin{enumerate}
        \item $\nec{\phi}_{E}(X,O)=\nec{\phi}_{E}(A,Y)=E$ and
        \item $\suff{\phi}_{E}(X,\z)=\suff{\phi}_{E}(\z,Y)= E$.
    \end{enumerate}
\end{lemma}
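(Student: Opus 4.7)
The proof is a direct unravelling of the definitions, hinging on the fact that any Cartesian product involving the empty set is itself empty, which makes the existential condition defining $\poss{\phi}_{E}$ trivially fail and the universal (inclusion) condition defining $\suff{\phi}_{E}$ trivially succeed. I would handle part~(2) first, since it is the simplest, and then reduce part~(1) to the content of part of the preceding computations via the duality between $\nec{\phi}_{E}$ and $\poss{\phi}_{E}$.

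For part~(2), fix any $X\subseteq A$ and any $e\in E$. Then $X\times\emptyset\times\set{e}=\emptyset$, and since $\emptyset\subseteq\phi$ holds vacuously, the defining condition of $\suff{\phi}_{E}$ in \eqref{suffE} is satisfied by $e$. Hence $\suff{\phi}_{E}(X,\z)=E$. The argument for $\suff{\phi}_{E}(\z,Y)=E$ is identical, using $\emptyset\times Y\times\set{e}=\emptyset$.

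For part~(1), I would apply the definition $\nec{\phi}_{E}(X,Y)\df E\setminus\poss{\phi}_{E}(A\setminus X,O\setminus Y)$. Taking $Y=O$ gives $O\setminus Y=\emptyset$, so we must evaluate $\poss{\phi}_{E}(A\setminus X,\emptyset)$. By \eqref{possE}, membership of $e$ in this set requires $((A\setminus X)\times\emptyset\times\set{e})\cap\phi\neq\emptyset$; but the Cartesian product is empty, and so the intersection is empty, giving $\poss{\phi}_{E}(A\setminus X,\emptyset)=\emptyset$. Consequently $\nec{\phi}_{E}(X,O)=E\setminus\emptyset=E$. The case $\nec{\phi}_{E}(A,Y)=E$ is symmetric: then $A\setminus X=\emptyset$, and $\emptyset\times(O\setminus Y)\times\set{e}=\emptyset$ again yields $\poss{\phi}_{E}(\emptyset,O\setminus Y)=\emptyset$.

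There is no real obstacle here; the entire argument is bookkeeping around the empty set. If anything deserves attention, it is to note explicitly that the result does not depend on any property of $\phi$ (it holds for an arbitrary ternary relation), and that it can equivalently be read off from the characterisation~\eqref{necessity}: when $Y=O$ the disjunct $v\in Y$ is automatically true for all $v\in O$, and when $X=A$ the disjunct $u\in X$ is automatically true, so the implication in \eqref{necessity} is vacuously satisfied for every $e\in E$.
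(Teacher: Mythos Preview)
Your proof is correct and essentially matches the paper's: part~(2) is argued identically via $X\times\emptyset\times\{e\}=\emptyset\subseteq\phi$, and for part~(1) the paper appeals directly to the characterisation~\eqref{necessity} (the disjunct $v\in O$ or $u\in A$ being automatically true), which you also spell out in your final paragraph. Your primary route for part~(1) via the duality $\nec{\phi}_{E}(X,Y)=E\setminus\poss{\phi}_{E}(A\setminus X,O\setminus Y)$ is a harmless variant of the same idea.
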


\begin{proof}
    1. Let $e\in E$. It is obvious that for any $x\in X$ and $o\in O$, if $\klam{x,o,e}\in\varphi$, then $x\in X$ or $o\in O$. From \eqref{necessity}, $e\in \nec{\phi}_{E}(X,O)$, i.e. $\nec{\phi}_{E}(X,O)=E$. In an analogous way we obtain that $\nec{\phi}_{E}(A,Y)=E$.

    2. Let $e\in E$. Since $X\times \emptyset\times \set{e}\subseteq \phi$, by the semantics for the sufficiency operator, $e\in \suff{\phi}_{E}(X,\z)$, i.e. $\suff{\phi}_{E}(X,\z)=E$. It follows analogously that $\suff{\phi}_{E}(\z,Y)= E$.
\end{proof}

The other operators are defined analogously. We note that there are two such discriminators defined on each of $A,O,E$.

The triadic concept operators can be integrated into this setup, see \cite{dg_approx} for the binary case, and we can look at compositions of a binary modality with an operator of triadic contexts which maps a subset to a binary relation. Another way to obtain binary relations is to use cylindrifications, effectively ``forgetting'' one component.

\section{The approximation operators}

Given an information system of actors (or objects, or environments), we can factor it with respect to its set of properties: we identify these vectors with actors and their properties that cannot be distinguished with respect to them. Given $a\in A$, let $[a]$ be the equivalence class of actors. Given $X\subseteq A$, let $\overline{X}$ be the upper approximation of $X$, and $\underline{X}$ be its lower approximation as defined in \eqref{def:upp} and \eqref{def:low}.

Given an affordance $\varphi$, we will abuse the notation writing $([a]\times[b]\times[c])\cap\varphi\neq\emptyset$ to say that there is $\langle x,y,z\rangle\in [a]\times[b]\times[c]$ such that
\[
\klam{\klam{x,f_A(x)},\klam{y,f_O(y)},\klam{z,f_E(z)}}\in\varphi.
\]
The analogous meaning is ascribed to the following statement:
\[
[a]\times[b]\times[c]\subseteq\varphi.
\]
With these, we can put a rough structure on affordances in a~natural way. Firstly,
\[
\klam{\klam{a,f_A(a)},\klam{o,f_O(o)},\klam{e,f_E(e)}}\in\overline{\varphi}\Iffdef([a]\times[o]\times[e])\cap\varphi\neq\emptyset.
\]
Secondly,
\[
\klam{\klam{a,f_A(a)},\klam{o,f_O(o)},\klam{e,f_E(e)}}\in\underline{\varphi}\Iffdef[a]\times[o]\times[e]\subseteq\varphi.
\]
In this way, we define the upper and the lower approximation of the affordance $\varphi$. Before we speak about the meaning of these, let us agree that below we are going to talk about actors instead of vectors of actors and properties, objects instead of vectors of objects and their properties, and environments instead of vectors of environments and their properties. Thus, instead of writing
\[
\klam{\klam{a,f_A(a)},\klam{o,f_O(o)},\klam{e,f_E(e)}}\in\ovarphi
\]
we just write
\[
\klam{a,o,e}\in\ovarphi
\]
and similarly for $\varphi$ and $\uvarphi$. This convention will not lead to any ambiguity, as for now we are not going to make use of properties, which at this moment only come into play when we factorize the sets $A$, $O$, and $E$, respectively.

\begin{definition}
    A \emph{rough affordance} is a pair $\klam{\uvarphi,\ovarphi}$ where $\varphi$ is an affordance relation.\QED
\end{definition}

\begin{lemma}\label{lem:rough-aff-inclusions}
    $\uvarphi\subseteq\varphi\subseteq\ovarphi$.
\end{lemma}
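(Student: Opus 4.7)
The plan is to unpack the definitions of $\underline{\varphi}$ and $\overline{\varphi}$ and invoke the reflexivity of the indiscernibility relations $\theta_A$, $\theta_O$, $\theta_E$ that generate the equivalence classes $[a]$, $[o]$, $[e]$. Since $\theta$ on each sort is defined as $x \mathrel{\theta} y$ iff $f(x)=f(y)$, it is an equivalence relation and in particular reflexive, so $a \in [a]$, $o \in [o]$, and $e \in [e]$ for every actor, object, and environment. This single observation does essentially all the work, and under the notational convention declared just before the lemma (writing $\langle a,o,e\rangle$ in place of the triple of vectors), both inclusions collapse to one-line verifications.

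For the first inclusion $\underline{\varphi} \subseteq \varphi$, I would fix an arbitrary $\langle a,o,e\rangle \in \underline{\varphi}$. By the definition of the lower approximation, $[a]\times[o]\times[e] \subseteq \varphi$. By reflexivity, $\langle a,o,e\rangle$ itself lies in $[a]\times[o]\times[e]$, and therefore in $\varphi$.

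For the second inclusion $\varphi \subseteq \overline{\varphi}$, I would fix $\langle a,o,e\rangle \in \varphi$. Again by reflexivity of the three indiscernibility relations, $\langle a,o,e\rangle \in [a]\times[o]\times[e]$, which witnesses $([a]\times[o]\times[e])\cap\varphi \neq \emptyset$, hence $\langle a,o,e\rangle \in \overline{\varphi}$ by the definition of the upper approximation.

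There is no genuine obstacle here; the only thing worth being explicit about is that the abbreviated notation $\langle a,o,e\rangle$ really means the triple of pairs $\langle\langle a,f_A(a)\rangle,\langle o,f_O(o)\rangle,\langle e,f_E(e)\rangle\rangle$, so that membership in $\varphi$ (as originally defined) matches membership in the factored version used in the definitions of $\underline{\varphi}$ and $\overline{\varphi}$. Once this is noted, both inclusions are immediate from reflexivity, and the lemma is the ternary analogue of the standard rough-set fact $\underline{X}\subseteq X \subseteq \overline{X}$ already recorded (implicitly) in Section~3.
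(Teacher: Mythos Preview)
Your proof is correct and follows exactly the same line as the paper's own proof: both inclusions are obtained by observing that $\klam{a,o,e}\in[a]\times[o]\times[e]$ (reflexivity of the indiscernibility relations) and then unfolding the definitions of $\uvarphi$ and $\ovarphi$. Your version is slightly more explicit about the reflexivity step and the notational convention, but the argument is identical.
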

\begin{proof}
    If $\klam{a,o,e}\in\uvarphi$, then $[a]\times[o]\times[e]\subseteq\varphi$, so in particular the triple $\klam{a,o,e}$ is in $\varphi$. Further, if $\klam{a,o,e}$ is in $\varphi$, then it must be the case that $[a]\times[o]\times[e]\cap\varphi\neq\emptyset$. So $\klam{a,o,e}\in\ovarphi$.
\end{proof}

With an affordance $\varphi$ we can associate its upper approximation $\oalpha_{S}(\varphi)$ on every coordinate $S\in\{A,O,E\}$ i.e., we have approximations for, respectively, actors, objects, and end environments:
\begin{align*}
a\in\oalpha_{A}(\varphi)(Y,Z)&{}\Iffdef\left([a]\times\overline{Y}\times\overline{Z}\right)\cap\varphi\neq\emptyset,\\
o\in\oalpha_{O}(\varphi)(X,Z)&{}\Iffdef\left(\overline{X}\times[o]\times\overline{Z}\right)\cap\varphi\neq\emptyset,\\
e\in\oalpha_E(\varphi)(X,Y)&{}\Iffdef\left(\overline{X}\times\overline{Y}\times[e]\right)\cap\varphi\neq\emptyset.
\end{align*}
Let us comment on the relation of the $\oalpha$-operators to possibility operators from page~\pageref{page:possibility-operators}. For this, we fix $S$ to be $E$. Both the possibility operator $\poss{\varphi}_E$ and the approximation operator $\oalpha_{E}(\varphi)$ have the type $2^{A} \times 2^{O} \to 2^{E}$, but they behave differently. The former is crisp and sends the pairs of collections of agents and objects, respectively, to the collection of environments in which agents and objects may $\phi$-interact. The latter is rough, as given a pair $\klam{X,Y}$, where $X$ is a collection of agents and $Y$ is a collection of objects, $\oalpha_E(X,Y)$ is the collection of all those environments  (contexts) $e$ such that an $X$-like agent and $Y$-like object $\phi$-interact in an environment resembling $e$.

In an analogous way, we define $\oalpha$ operators for $\ovarphi$, but, as expected, there is no difference between the upper approximations of $\varphi$ and $\ovarphi$.
\begin{theorem}
For each $S\in\{A,O,E\}$ we have that
\begin{gather}\label{phiover}
\oalpha_S(\varphi)=\oalpha_S\left(\ovarphi\right).
\end{gather}
\end{theorem}
\begin{proof}
    We will show the equality for $E$ only, as the remaining cases are analogous.

    \smallskip

    ($\subseteq$) Let $e\in \oalpha_{E}(\varphi)(X,Y)$, i.e., $\left(\oX\times\oY\times[e]\right)\cap\varphi\neq\emptyset$. Then, by Lemma~\ref{lem:rough-aff-inclusions} it must be the case that $\left(\oX\times\oY\times[e]\right)\cap\ovarphi\neq\emptyset$. So $e\in\oalpha_{E}(\ovarphi)(X,Y)$.

    \smallskip

    ($\supseteq$) For the other direction, assume that $e\in\oalpha_E(\ovarphi)(X,Y)$, that is $\left(\oX\times\oY\times[e]\right)\cap\ovarphi\neq\emptyset$. Let $x\in\oX$, $y\in\oY$, and $z\in[e]$ be such that $([x]\times[y]\times[z])\cap\varphi\neq\emptyset$. Let $\klam{a,o,u}$ be a triple which is in the intersection. But then $[a]=[x]$ and $[o]=[y]$ and $[u]=[e]$, and so $a\in\oX$, $o\in\oY$, and in consequence $\left(\oX\times\oY\times[e]\right)\cap\ovarphi\neq\emptyset$. Thus $e\in \oalpha_E(\varphi)(X,Y)$, as desired.
\end{proof}

To show that the upper approximation operators can be meaningfully applied in the formal theory of affordances let us consider some examples.

\begin{example}\label{ex:upper-approx-with-Robin}
We remain in the realm of pets, pet toys, and playgrounds, with cosmetic changes:
    \begin{itemize}
    \item $A\defeq\text{a set of pets}$,
    \item $O\defeq\text{a set of pet toys}$, $S\subseteq O$ is the subset of sticks,
    \item $E\defeq\text{a set of playgrounds}$, $P\subseteq E$ the set of dog parks,
    \item $\varphi\defeq\text{the affordance of fetching}$,
    \item $r\defeq\text{Robin (the dog of one of the authors)}$.
\end{itemize}
Then, $r\in\overline{\alpha}_A(\varphi)(S,P)$ means that a dog resembling Robin fetches a stick-like object in a surrounding resembling a dog park (dog playground). This perfectly models a situation in which we recognize objects and situations in the world up to a certain level of granularity. We see a dog that resembles the dog we know well, the dog fetches an object that resembles a stick, and the context resembles that of a dog park. A naked-eye observation from a distance or foggy weather may result in precisely such a roughness of perception.\QED
\end{example}

There is no reason, of course, that the machinery should incorporate roughness in all three coordinates. For example, the following operators
\begin{align*}
a\in\oalpha^{1}_{A}(\varphi)(Y,Z)&{}\Iffdef ([a]\times Y\times Z)\cap\varphi\neq\emptyset,\\
a\in\oalpha^{2}_{A}(\varphi)(Y,Z)&{}\Iffdef (\{a\}\times \oY\times Z)\cap\varphi\neq\emptyset,\\
a\in\oalpha^{3}_{A}(\varphi)(Y,Z)&{}\Iffdef (\{a\}\times Y\times \oZ)\cap\varphi\neq\emptyset,\\
a\in\oalpha^{12}_{A}(\varphi)(Y,Z)&{}\Iffdef ([a]\times \oY\times Z)\cap\varphi\neq\emptyset,\\
a\in\oalpha^{13}_{A}(\varphi)(Y,Z)&{}\Iffdef ([a]\times Y\times \oZ)\cap\varphi\neq\emptyset,\\
a\in\oalpha^{23}_{A}(\varphi)(Y,Z)&{}\Iffdef (\{a\}\times \oY\times \oZ)\cap\varphi\neq\emptyset,
\end{align*}
express uncertainty (approximation, roughness) on selected projections of the product $A\times O\times E$.

\begin{example}
    With the interpretation from Example~\ref{ex:upper-approx-with-Robin} the following meanings are associated to the operators:
    \begin{itemize}\itemsep+3pt
        \item $r\in\oalpha^{1}_{A}(\varphi)(S,P)$ means that a dog similar to Robin fetches a~stick in a dog park, that is we are certain what the object and the environment involved in the action are, but we are unsure with respect to the actor.
        \item $r\in\oalpha^{2}_{A}(\varphi)(S,P)$ means that Robin fetches a~stick-like object in a dog park, so the actor and the environment are crisp, but the object is only approximated.
        \item $r\in\oalpha^{3}_{A}(\varphi)(S,P)$ means that Robin fetches a~stick in an environment resembling a~dog park. Here the uncertainty concerns the environment, while we are unequivocal about the actor and the object.
        \item $r\in\oalpha^{12}_{A}(\varphi)(S,P)$ means that in a dog park a~dog similar to Robin fetches an~object similar to a~stick. Thus, the level of roughness rises as we only approximate two out of the three components of the action: the environment is crisp, but the actor and the object are rough.
        \item $r\in\oalpha^{13}_{A}(\varphi)(S,P)$ means that in an environment resembling a~dog park a~dog similar to Robin fetches a~stick. The situation is as above, with certainty shifted to the object, with the two remaining components rough.
        \item $r\in\oalpha^{23}_{A}(\varphi)(S,P)$ means that Robin fetches something similar to a~stick in an environment resembling a dog park. We are sure of the actor's identity, but we can only surmise what the object and environment are.
    \end{itemize}
Playing around with the coordinates, we can express the roughness of various aspects of the scenarios we put into focus. Clearly, approximating all three coordinates results in $\oalpha_A(\varphi)$ operator.\QED
\end{example}

\begin{example} Imagine yourself going through the family album and encountering a~photograph of your maternal grandparents, with your grandfather ($a$) kissing your grandmother ($b$) on the cheek. The background is a medieval town, which you know is in Italy, and which you find similar to Urbino. In this case, assuming that $\varphi$ is the affordance of kissing on the cheek, $\oalpha_E^3(\varphi)(\{a\},\{b\})$ is the collection of all medieval Italian towns that are similar to Urbino.\QED
\end{example}

\begin{example} The manakins are small birds native to Central America distinguished by a~dancing ritual involving a kind of moonwalk, usually performed on a tree branch~\cite{manakin}. An ornithologist fascinated by the phenomenon may naturally ask: \emph{are there similar birds mating in this way?} To find out, they should localize places (environments) where a male shows off in an analogous way. To capture it in our framework, we set the set $A$ of actors, and $O$ of objects to consist of birds (so $A=O$). Let $M$ be the set of manakin males, $F$ of manakin females. Let $\varphi$ be the affordance of moonwalking in front of your potential partner, limited to birds. With this interpretation we have that
\[
\oalpha^{12}_{E}(M,F)=\{e\in E\mid (\overline{M}\times\overline{F}\times\{e\})\cap\varphi\neq\emptyset\}
\]
is the collection of all environments in which birds similar to manakins participate in the dancing ritual depicted in the linked footage, i.e., the places of interest of the curious ornithologist. \QED
\end{example}

For any $i\in\{1,2,3,12,13,23\}$ by Lemma~\ref{lem:rough-aff-inclusions} we have
\[
\oalpha^{i}_{A}(\varphi)(X,Y)\subseteq\oalpha^{i}_{A}(\ovarphi)(X,Y).
\]
However, unlike the unrestricted versions in \eqref{phiover}, the reverse inclusions fail in general, as we show below.
\begin{example}
We stick to the interpretation from Example~\ref{ex:upper-approx-with-Robin}, and we consider the case in which $i\defeq 23$. Suppose that $r\in\oalpha^{23}_{A}(\ovarphi)(S,P)$, i.e.
\begin{xalignat*}{2}
r\in\oalpha^{23}_{A}(\ovarphi)(S,P) &\Iff (\{r\}\times\overline{S}\times\overline{P})\cap\ovarphi\neq\emptyset, &&\text{by Def. of $\oalpha^{23}_{A}$,} \\
&\Iff (\exists s \in S, p \in P)[(\set{r} \times \nec{s} \times \nec{p}) \cap \ovarphi \neq \z], &&\text{by Def. of $\overline{X}$,} \\
&\Iff (\exists s \in S, p \in P)[(\nec{r} \times \nec{s} \times \nec{p}) \cap \varphi \neq \z], &&\text{by Def. of $\ovarphi$.}
\end{xalignat*}
This tells us that some dog resembling Robin ($r' \in \nec{r}$) fetches a stick-like object ($o' \in \nec{o}$) in a dog-park-like environment ($p' \in \nec{p}$). If $\card{\nec{r}} \geqslant 2$, then we cannot say for sure that the dog $r'$ that resembles Robin is, indeed, Robin. To see this, observe that by the definition we have
\[
r\in\oalpha^{23}_{A}(\varphi)(S,P) \Iff (\{r\}\times\overline{S}\times\overline{P})\cap\varphi\neq\emptyset.
\]
Observe that in the equivalence above $\varphi$ is used and not its upper approximation, and that the first projection is fixed. Therefore, we may only take into account $\{r\}$, but in general not its equivalence class $[r]$. In the fleshy entourage of the example, we attribute certainty to the actor's identity, i.e., Robin. Thus, passing from $r\in\oalpha^{23}_{A}(\ovarphi)(S,P)$ to $r\in\oalpha^{23}_{A}(\varphi)(S,P)$ expressed in natural language is like saying ``From the fact that a dog similar to Robin is involved in an action, I can conclude that Robin himself is''. But, as we observed, it cannot be done if the collection of dogs similar to Robin has more than one element.
\QED\end{example}

A generalization of the reasoning from the example above shows that for any $i\in\{1,2,3,12,13,23\}$
\[
\oalpha^{i}_{A}(\ovarphi)(X,Y)\subseteq \oalpha_{A}(\varphi)(X,Y).
\]
Thus, as we see, the operator $\oalpha_{S}(\varphi)$ for any $S\in\{A,O,E\}$ gives the largest approximation.

In a complete analogy with the previous section, we may define the lower approximation operators (one for each component) in the following way:
\begin{align*}
a\in\ualpha_{A}(\varphi)(Y,Z)&{}\Iffdef[a]\times\uY\times\uZ\subseteq\varphi,\\
o\in\ualpha_{O}(\varphi)(X,Z)&{}\Iffdef\uX\times[o]\times\uZ\subseteq\varphi,\\
e\in\ualpha_E(\varphi)(X,Y)&{}\Iffdef\uX\times\uY\times[e]\subseteq\varphi.
\end{align*}
\begin{theorem}
For each $S\in\{A,O,E\}$ we have
\[
\ualpha_S(\varphi)(X,Y)=\ualpha_S(\uvarphi)_S(X,Y).
\]
\end{theorem}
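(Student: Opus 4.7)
My plan is to mimic the structure of the previous theorem (for $\oalpha$) and establish the equality by two inclusions for the case $S = E$; the cases $S = A$ and $S = O$ are then entirely symmetric and I would just note this at the end rather than redoing the work.

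The inclusion $\ualpha(\uvarphi)_E(X,Y) \subseteq \ualpha(\varphi)_E(X,Y)$ is the easy direction: if $\uX\times\uY\times[e]\subseteq\uvarphi$, then Lemma~\ref{lem:rough-aff-inclusions} gives $\uvarphi\subseteq\varphi$, and inclusion transfers immediately. So the content of the theorem lives in the other direction.

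For the reverse inclusion, I would rely on the standard fact that a lower approximation of a set is closed under the indiscernibility relation, i.e., $b\in\uX$ and $[b]=[c]$ imply $c\in\uX$, so $[b]\subseteq\uX$ for every $b\in\uX$. Assume $e\in \ualpha(\varphi)_E(X,Y)$, that is $\uX\times\uY\times[e]\subseteq\varphi$. Pick an arbitrary $\klam{a,o,u}\in\uX\times\uY\times[e]$; to conclude $\klam{a,o,u}\in\uvarphi$, I must show $[a]\times[o]\times[u]\subseteq\varphi$. But by closure of lower approximations under $\theta$ we have $[a]\subseteq\uX$, $[o]\subseteq\uY$, and of course $[u]=[e]$, so $[a]\times[o]\times[u]\subseteq \uX\times\uY\times[e]\subseteq\varphi$, as required. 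This shows $\uX\times\uY\times[e]\subseteq\uvarphi$, i.e.\ $e\in\ualpha(\uvarphi)_E(X,Y)$.

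The only potential obstacle is making sure the ``closure under $\theta$'' step is rigorously justified, since the paper has not stated it as a lemma; I would either invoke it as a well-known property of Pawlak's lower approximation, or prove the one-line fact $a\in\uX \Rightarrow [a]\subseteq\uX$ directly from the definition of $\uX$ in \eqref{def:low} (because $[b]=[a]\subseteq X$ whenever $b\in[a]$). Once this is in place, the argument generalises verbatim to $S=A$ and $S=O$ just by permuting which coordinate carries the singleton class $[\cdot]$ and which carry the lower approximations $\uX,\uY,\uZ$.
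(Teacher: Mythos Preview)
Your proposal is correct and matches the paper's own proof essentially line for line: the paper also handles only $S=E$, obtains $\supseteq$ from Lemma~\ref{lem:rough-aff-inclusions}, and for $\subseteq$ picks an arbitrary triple in $\uX\times\uY\times[e]$ and uses the same closure fact $a\in\uX\Rightarrow[a]\subseteq\uX$ (stated there as ``$[a]\subseteq X$, so $[a]\subseteq\uX$'') to conclude $[a]\times[o]\times[e]\subseteq\varphi$.
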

\begin{proof}
    This time, the inclusion $\supseteq$ is immediate in light of Lemma~\ref{lem:rough-aff-inclusions}. For $\subseteq$, assume that $e\in\ualpha_E(\varphi)(X,Y)$, i.e.,  $\uX\times\uY\times[e]\subseteq\varphi$. Let $\klam{a,o,f}$ be from $\uX\times\uY\times[e]$. Thus $[a]\subseteq X$, $[o]\subseteq Y$, and $[f]=[e]$. So, $[a]\subseteq\uX$, $[o]\subseteq\uY$, and $[a]\times[o]\times[e]\subseteq\varphi$. Therefore, $\klam{a,o,e}\in\uvarphi$. As a result, $\uX\times\uY\times[e]\subseteq\uvarphi$, and $e\in \ualpha_E(\uvarphi)(X,Y)$.
\end{proof}

However, as we are mainly interested in upper approximations epitomizing roughness and imprecision phenomena, here, we do not delve into properties of the lower approximation operators leaving this topic for further work.

\section{Mixing modal and approximation operators}

From the sections above, we draw the following conclusion. The operators that make most sense in the setting of affordances are
\begin{itemize}
        \item possibility,
        \item sufficiency,
        \item upper $\varphi$-approximation.
    \end{itemize}
As we defined the possibility and sufficiency operators associated with an affordance $\varphi$, we can analogously introduce such operators for the lower and upper approximations of $\varphi$. Below, we point to dependencies between different operators, including lower and upper approximations for~$\varphi$. As before, we distinguish the set $E$, which does not harm the generality of the exposition.
\begin{align}
    \poss{\overline{\varphi}}_E(X,Y)&{}=\oalpha_E(\varphi)(X,Y),\label{eq:poss-up-approx-1}\\
    \overline{\poss{\varphi}_E(\overline{X},\overline{Y})}&{}=\oalpha_E(\varphi)(X,Y),\label{eq:poss-up-approx-2}\\
    \suff{\uvarphi}_{E}(X,Y)&{}\subseteq\ualpha_{E}(\varphi)(X,Y),\label{eq:suff-lo-approx-1}\\
    \underline{\suff{\varphi}_E(\underline{X},\underline{Y})}&{}=\ualpha_{E}(\varphi)(X,Y).\label{eq:suff-lo-approx-2}
\end{align}
\begin{proof}
\eqref{eq:poss-up-approx-1} Let $e\in \poss{\overline{\varphi}}_E(X,Y)$. By definition, $(X\times Y\times \set{e})\cap \overline{\varphi}\neq \emptyset$. It follows that there are $x\in X$ and $y\in Y$ such that $\klam{x,y,e}\in \overline{\varphi}$. Thus, $([x]\times[y]\times[e])\cap \varphi\neq\emptyset$. Since $[x]\subseteq \overline{X}$ and $[y]\subseteq \overline{Y}$, we obtain $(\overline{X}\times\overline{Y}\times [e])\cap \varphi\neq\emptyset$ and $e\in \oalpha(\varphi)_E(X,Y)$.

\smallskip

Fix $e\in \oalpha(\varphi)_E(X,Y)$. By definition, $(\overline{X}\times\overline{Y}\times [e])\cap \varphi\neq\emptyset$. So, there exist $x'\in \overline{X}$, $y'\in\overline{Y}$ and $e'\in [e]$ such that $\klam{x',y',e'}\in\varphi$. It follows that there exist $x\in X$ such that $[x]=[x']$ and $y\in Y$ such that $[y]=[y']$'. Since $[x]\times[y]\times[e]\cap \varphi\neq\emptyset$, the triple $\klam{x,y,e}\in\overline{\varphi}$. Thus, $(X\times Y\times\set{e})\cap\overline{\varphi}\neq\emptyset$ and, consequently, $e\in \poss{\overline{\varphi}}_E(X,Y)$.

\smallskip

\eqref{eq:poss-up-approx-2} Let $e\in \overline{\varphi}_E(X,Y)$. By definition, $(\overline{X}\times\overline{Y}\times [e])\cap \varphi\neq\emptyset$. So, there exist $x\in \overline{X}$, $y\in\overline{Y}$ and $e'\in [e]$ such that $\klam{x,y,e'}\in\varphi$. It follows that $\overline{X}\times\overline{Y}\times\set{e'}\cap \varphi\neq \emptyset$ and thus $e'\in \poss{\varphi}_E(\overline{X},\overline{Y})$. Since $[e]\cap \poss{\varphi}_E(\overline{X},\overline{Y})\neq \emptyset$, we obtain $e\in \overline{\poss{\varphi}_E(\overline{X},\overline{Y})}$.

\smallskip

Suppose that $e\in \overline{\poss{\varphi}_E(\overline{X},\overline{Y})}$. Then, $[e]\cap \poss{\varphi}_E(\overline{X},\overline{Y})\neq\emptyset$ and it follows that there exists $e'\in[e]\cap \poss{\varphi}_E(\overline{X},\overline{Y})$, i.e., $e'\in [e]$ and $(\overline{X}\times\overline{Y}\times\set{e'})\cap \varphi\neq\emptyset$. It is immediate to see that $(\overline{X}\times\overline{Y}\times [e])\cap \varphi\neq\emptyset$ and therefore $e\in \overline{\varphi}_E(X,Y)$.

\smallskip

\eqref{eq:suff-lo-approx-1} Assume that $e\in\suff{\uvarphi}_E(X,Y)$, i.e., $X\times Y\times\{e\}\subseteq\uvarphi$. We want to show that $\uX\times\uY\times[e]\subseteq\varphi$. To this end, let $\klam{x,y,f}\in \uX\times\uY\times[e]$. Thus $[x]\subseteq X$, $[y]\subseteq\uY$, and $[f]=[e]$. Therefore, $x\in X$ and $y\in Y$, and by the assumption we have  $\klam{x,y,e}\in\uvarphi$, which means that $[x]\times[y]\times[e]\subseteq\varphi$. In particular, $\klam{x,y,f}\in\varphi$, as desired.

\smallskip

\eqref{eq:suff-lo-approx-2} Let $e\in \ualpha_{E}(\varphi)(X,Y)$. Then, $\underline{X}\times\underline{Y}\times [e]\subseteq \varphi$. It follows that $\underline{X}\times\underline{Y}\times \set{e'}\subseteq \varphi$ for all $e'\in [e] $ and therefore $[e]\subseteq \suff{\varphi}_E(\underline{X},\underline{Y}) $, i.e., $e\in \underline{\suff{\varphi}_E(\underline{X},\underline{Y})} $. The reverse direction follows immediately by changing the direction the implications.
\end{proof}

\begin{example}
    The right-to-left inclusion of \eqref{eq:suff-lo-approx-1} fails, i.e., in general it does not have to be the case that
    \[
        \ualpha_{E}(\varphi)(X,Y)\subseteq\suff{\uvarphi}_{E}(X,Y)
    \]
    holds true. To show that it fails, we need to demonstrate that for some $X\subseteq A$, $Y\subseteq O$ and $e\in E$, $\uX\times\uY\times[e]\subseteq\varphi$, yet  $X\times Y\times\{e\}\nsubseteq\uvarphi$, i.e., there are $x\in X$ and $y\in Y$ such that $[x]\times[y]\times[e]\nsubseteq\varphi$. Again, let us use the ambient from Example \ref{ex:upper-approx-with-Robin}. If we consider $r\defeq\mathrm{Robin}$ and its equivalence class $[r]$ (the set of all pets similar to him), then for $X\defeq[r]\setminus\{r\}$, $\uX=\emptyset$. In consequence, we have  $\ualpha_{E}(\varphi)(X,Y)=E$. But, if we take $s$ to be a particular stick, and $e$ a particular dog park, and $\varphi$ an affordance of fetching, then it obviously does not have to be the case that $[r]\times[s]\times[e]\subseteq\varphi$, i.e., there may be a dog similar to Robin and a stick similar to $s$ such that the dog cannot fetch the stick in $e$ (in our case, e.g., due to certain hidden factors not accessible from the point of vie of information systems).\QED
\end{example}

From \eqref{eq:poss-up-approx-1} and \eqref{eq:poss-up-approx-2}, and from $\overline{\overline{X}}=\overline{X}$ we obtain that
\[
\poss{\overline{\varphi}}_{E}(\overline{X},\overline{Y})=\poss{\overline{\varphi}}_{E}(X,Y).
\]
\begin{proof}
\[
    \poss{\overline{\varphi}}_{E}(\overline{X},\overline{Y})=\overline{\poss{\varphi}_{E}\left(\overline{\overline{X}},\overline{\overline{Y}}\right)}=\overline{\poss{\varphi}_{E}(\overline{X},\overline{Y})}=\poss{\overline{\varphi}}_{E}(X,Y).\qedhere
\]
\end{proof}

Similarly, for the sufficiency operator, we have
\[
\suff{\uvarphi}_E(\uX,\uY)=\suff{\uvarphi}_E(X,Y).
\]
\begin{proof}
    ($\subseteq$) Using \eqref{eq:suff-lo-approx-1}, \eqref{eq:suff-lo-approx-2}, and $\uX=\underline{\uX}$ we obtain
    \[
\suff{\uvarphi}_E(\uX,\uY)\subseteq \underline{\suff{\varphi}_E(\underline{\uX},\underline{\uY})}=\underline{\suff{\varphi}_E(\uX,\uY)}=\suff{\uvarphi}_E(X,Y).
\]
\smallskip

($\supseteq$) In general, we have that $\uX\subseteq X$, and the sufficiency operator is antitone in both coordinates, so $\suff{\uvarphi}_E(X,Y)\subseteq \suff{\uvarphi}_E(\uX,\uY)$.
\end{proof}

\section{Conclusion and future work}

Above, we have introduced a mathematical model of affordances as ternary relations among actors, objects, and environments, and we have pointed to modal and approximation operators that constitute a~framework for reasoning with affordances. We have sketched the semantics, yet we shied away from logic as such, and in the next step, we aim to develop a~system of logic for the operators presented. Such a system requires, clearly, a multi-sorted approach, as we essentially work with three sorts of objects. The logic is going to be a mixed logic in the sense of \cite{Duntsch-et-al-AMLWBO}, as we employ the operator of sufficiency next to possibility. Producing its syntax and a fully fledged semantics is a natural continuation of the ruminations from this paper.

The above calls for addressing the following problem as well: are there ``natural'' axioms for affordances, for example, for knowledge spaces in learning theory?

So far, on the level of operators, we have only used the definition of an affordance in terms of information systems as a source of intuition, allowing us to talk about equivalence classes of similar objects from three different sorts. Therefore, another task is to develop tools that allow going down to the level of information systems and properties, and use them in reasoning with affordances.

Last, but not least, we aim to develop \emph{actions} as arising from affordances, and \emph{concepts} as emerging from actions. With the operators at hand, we may, for example, take a step similar to Wille's approach  \cite{wille82}, where (formal) concepts were defined by means of a sufficiency operator. This, however, is not the only possibility, as we would like to be less formal and try to capture concepts as reasonable models of concepts arising in ecological circumstances.

\section*{Acknowledgments}

\begin{sloppypar}
This research was funded by the National Science Center (Poland), grant number~2020/39/B/HS1/00216. Ivo D\"untsch  would like to thank Nicolaus Copernicus University in Toru\'n for supporting him through the ``Excellence Initiative--Research University'' program.

The authors would like to thank the anonymous reviewers for their insightful and fair comments, which helped improve the paper.
\end{sloppypar}

\bibliographystyle{apalike}
\providecommand{\noop}[1]{}

\end{document}